\documentclass[10pt,a4paper,english]{scrartcl}
\pdfoutput=1

\usepackage[T1]{fontenc}
\usepackage[utf8]{inputenc}
\usepackage{babel}

\usepackage[sc]{mathpazo}   
\usepackage[scaled=.95]{helvet}
\usepackage{courier}

\usepackage{ltablex}

\usepackage{microtype}

\KOMAoptions{DIV=last}						

\usepackage{amssymb,amsthm, amsmath,mathtools}		

\usepackage{dsfont}						

\usepackage{tikz}							
\usetikzlibrary[matrix,arrows,calc,intersections, decorations.pathreplacing]
\usepackage{enumitem}

\usepackage{hyperref}
\hypersetup{
  colorlinks   = true,          
  urlcolor     = blue,    
  linkcolor    = blue,       
  citecolor   = purple        
}

\usepackage{todonotes}

\newcommand{\xqedhere}[2]{%
  \rlap{\hbox to#1{\hfil\llap{\ensuremath{#2}}}}}

\let\OLDthebibliography\thebibliography
\renewcommand\thebibliography[1]{
  \OLDthebibliography{#1}
  \small
  \setlength{\parskip}{0pt}
  \setlength{\itemsep}{1.5pt plus 0.3ex}
}

 \theoremstyle{plain}
 \newtheorem{thm}{Theorem}[section]
  \theoremstyle{definition}
  \newtheorem{defn}[thm]{Definition}
  \theoremstyle{definition}
  \newtheorem{example}[thm]{Example}
  \theoremstyle{plain}
  
  \theoremstyle{plain}
  \newtheorem{cor}[thm]{Corollary}
  \theoremstyle{remark}
  \newtheorem{rem}[thm]{Remark}
  \theoremstyle{plain}
  \newtheorem{prop}[thm]{Proposition}
  \theoremstyle{plain}
  
  \theoremstyle{remark}
  
  \theoremstyle{remark}
  
  \theoremstyle{plain}
  
  \theoremstyle{definition}
  
  \numberwithin{equation}{section}


\DeclareMathOperator{\Td}{Td}

\DeclareMathOperator{\Spec}{Spec}
\DeclareMathOperator{\Trop}{Trop}
\DeclareMathOperator{\trop}{trop}

\DeclareMathOperator{\Newt}{Newt}

\DeclareMathOperator{\rk}{rk}

\DeclareMathOperator{\conv}{conv}

\DeclareMathOperator{\id}{id}

\DeclareMathOperator{\Mink}{M}

\DeclareMathOperator{\Lat}{Lat}
\DeclareMathOperator{\ch}{ch}
\DeclareMathOperator{\vol}{vol}

\newcommand{\C}{{\mathds C}}
\newcommand{\G}{{\mathds G}}

\newcommand{\Q}{{\mathds Q}}
\newcommand{\Z}{{\mathds Z}}

\newcommand{\R}{{\mathds R}}

\newcommand{\mF}{{\mathcal F}}
\newcommand{\mG}{{\mathcal G}}
\newcommand{\mL}{{\mathcal L}}
\newcommand{\mN}{{\mathcal N}}
\newcommand{\mO}{{\mathcal O}}
\newcommand{\mT}{{\mathcal T}}

\newcommand{\bI}{{\mathbf I}}

\renewcommand{\injlim}{\varinjlim}
\renewcommand{\top}{{\operatorname{top}}}

\addtokomafont{section}{\large\rmfamily}
\addtokomafont{subsection}{\normalsize\rmfamily}
\addtokomafont{title}{\rmfamily\mdseries\scshape}	
\addtokomafont{paragraph}{\rmfamily}

\begin{document}

\title{\large Refined Tropicalizations for Schön Subvarieties of Tori}

\author{\normalsize Andreas Gross}
\date{}
\maketitle

\begin{abstract}
We introduce a relative refined $\chi_y$-genus for schön subvarieties of algebraic tori. These are rational functions of degree minus the codimension with coefficients in the ring of lattice polytopes. We prove that the relative refined $\chi_y$ turns sufficiently generic intersections into products, and that we can recover the ordinary $\chi_y$-genus by counting lattice points. Applying the tropical Chern character to the relative refined $\chi_y$-genus we obtain a refined tropicalization which is a tropical cycle having rational functions with $\Q$-coefficients as weights. We prove that the top-dimensional component of the refined tropicalization specializes to the unrefined tropicalization up to sign when setting $y=0$ and show that we can recover the $\chi_y$-genus by integrating the refined tropicalization with respect to a Todd measure.
\end{abstract}

\section{Introduction}
One of the main concerns of tropical geometry is to determine which invariants of a subvariety of an algebraic torus are preserved when passing to its tropicalization. Motivated by the conjectured correspondence theorem for refined curve counts \cite{BG16}, we ask if the $\chi_y$-genus is among the preserved invariants, or rather if we can define a refinement of the tropicalization from which it can easily be recovered. In \cite{NPS16}, good progress has been made for the refined correspondence theorem in genus $1$ by writing the universal curve over a linear system on a toric surface as a generic complete intersection. Tropicalizations of generic intersections can be computed using tropical intersection theory \cite{OP13}, a statement that we will show to remain true in the refined setting. This gives a first concrete idea of the role tropical intersection theory might play in the theory of refined curve counting.

The question whether the $\chi_y$-genus of a sufficiently generic complete intersection $Z=V(f_1)\cap \ldots \cap V(f_k)$ in $(\C^*)^n$ can be computed from combinatorial data associated to the $f_i$ is not new. It has already been known since the work of Danilov and Khovanski\u\i\ \cite{DK86} (cf.\ \cite{KS16} for a tropical proof) how to compute the $\chi_y$-genus of $Z$ in terms of the Newton polytopes $\Delta_i$ of the Laurent polynomials $f_i$. In the case $k=1$ Danilov and Khovanski\u\i\ gave an explicit formula for $\chi_y(Z)$ in terms of the $h^*$-polynomial of $\Delta_1$ and suggested to use the Cayley trick in order to reduce the computation of $\chi_y$ for complete intersections to the hypersurface case. While this certainly allows to compute the $\chi_y$-genus of any given complete intersection in $(\C^*)^n$, deducing a closed formula for $\chi_y(Z)$ in terms of the $\Delta_i$ from this algorithm still requires serious effort. This has been worked out much more recently by Di Rocco, Haase, and Nill \cite{DHN16} who proved that the $p$-th coefficient of $\chi_y(Z)$ is equal to 

\begin{equation*}
(-1)^{n-p}
\sum_{I\subseteq \{1,\dots, k\}}
(-1)^{|I|}
\left(
\sum_{\substack{\beta\in\Z^I_{\geq 0}\\
						|\beta|\leq p}}
(-1)^{|\beta|}
\binom{n+|I|}{p-|\beta|}
\left|
\left(
\sum_{i\in I} 
(1+\beta_i)\Delta_i
\right)
\cap \Z^n
\right|				
\right) \ ,
\end{equation*}
where the last occurring summation symbol denotes a Minkowski sum of polytopes. 

In order to see how this formula can be expressed using tropical intersection theory, we need to    simplify it considerably. The major problem in any attempt of simplification is the occurrence of counts of lattice points of various Minkowski sums of the polytopes $\Delta_i$. The easiest way to deal with this problem is to extend scalars: instead of working over the integers, we can work over the commutative ring $\mL(\Z^n)$ of lattice polytopes. It is generated by elements $[P]$ associated to lattice polytopes $P\subseteq \R^n$ which satisfy $[P]\cdot[Q] =[P+Q]$, that is multiplication is given by Minkowski sums. Furthermore, there exists a morphism $\Lat\colon \mL(\Z^n)\to \Z$ of abelian groups that sends $[P]$ to $|P\cap \Z^n|$ for every lattice polytope $P$. We see that after the substitution 
\begin{equation*}
\left|\left(\sum_{i\in I} (1+\beta_i)\Delta_i\right)\cap \Z^n \right| \qquad\longleftrightarrow\qquad \prod_{i\in I} [\Delta_i]^{1+\beta_i}
\end{equation*}
in the formula above, the polynomial $\chi_y(Z)\in \Z[y]$ is the image of the power series
\begin{equation*}
\sum_{p=0}^\infty
(-1)^{n-p}
\sum_{I\subseteq \{1,\dots, k\}}
(-1)^{|I|}
\left(
\sum_{\substack{\beta\in\Z^I_{\geq 0}\\
						|\beta|\leq p}}
(-1)^{|\beta|}
\binom{n+|I|}{p-|\beta|}
\prod_{i\in I} 
[\Delta_i]^{1+\beta_i}
\right)y^p
\end{equation*}
in $\mL(\Z^n)[[y]]$ under coefficient-wise application of $\Lat$. This series factors beautifully as
\begin{equation*}
(y-1)^n\prod_{i=1}^k \frac{1-[\Delta_i]}{1-[\Delta_i]y}\ ,
\end{equation*}
an expression astonishing not only for its simplicity, but also for almost being intersection-theoretic, making a connection to tropical intersection theory much more likely:  to compute the $\chi_y$-genus of an intersection $\bigcap V(f_i)$ we start with the $\chi_y$-genus of the ambient torus, i.e.\ with $(y-1)^n$, and then add one factor for every hypersurface we cut with. Of course, this is true only before applying $\Lat$, which does not preserve products. We are thus tempted us to consider $\frac{1-[\Delta_i]}{1-[\Delta_i]y}$ as refinement of $\chi_y(V(f_i))$ relative to the embedding of $V(f_i)$ in $T$. 

In the course of this paper we will explain and generalize the identity
\begin{equation*}
\chi_y\left(\bigcap V(f_i)\right)=\Lat \left((y-1)^n\prod_{i=1}^k \frac{1-[\Delta_i]}{1-[\Delta_i]y}\right) \ .
\end{equation*}

For the generalization we will need to replace the condition ``generic with Newton polytope $\Delta$''  for hypersurfaces by a condition that works in any codimension. A suitable notion is what was called \emph{schön} by Tevelev \cite{Tev07}, a condition that has already been used in \cite{KS12,KS16} to study the motivic nearby fiber of subvarieties of tori. The following theorem summarizes the results of section \ref{sec:results}.

\begin{thm}
\label{thm:main theorem}
We can assign to every schön subvariety $Z\subseteq T=(\C^*)^n$ a rational function 
\begin{equation*}
\chi_y^{T}(Z)\in \frac 1{(y-1)^n}\mL(\Z^n)[y]\subseteq \mL(\Z^n)[[y]]
\end{equation*}
such that 
\begin{enumerate}[label=(\roman*)]
\item we have 
\begin{equation*}
\chi_y(Z)=\chi_y(T)\Lat\left( \chi_y^T(Z)\right)=(y-1)^n\Lat\left(\chi_y^T(Z)\right)
\end{equation*}
for every schön subvariety $Z$ of $T$.
\item we have 
\begin{equation*}
\chi_y^T(V(f))=\frac{1-[\Delta]}{1-[\Delta]y}
\end{equation*}
for every Laurent polynomial $f\in \C[\Z^n]$ which is generic among polynomials with Newton polytope $\Delta$. 
\item for two schön subvarieties $Z$ and $Z'$ intersecting generically we have 
\begin{equation*}
\chi^T_y(Z\cap Z')=\chi^T_y(Z)\cdot \chi^T_y(Z') \ .
\end{equation*}
Here, intersecting generically means that $Z\cap Z'$ is schön, and the equality  $\overline {Z\cap Z'} =\overline Z\cap \overline Z'$ holds in any toric variety with sufficiently fine fan.
\end{enumerate}
\end{thm}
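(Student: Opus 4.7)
The plan is to construct $\chi_y^T(Z)$ from a schön toric compactification of $Z$ and verify the three properties using motivic additivity of $\chi_y$, direct computation on hypersurfaces, and transversality for generic intersections. Fix a unimodular fan $\Sigma$ supported on $\Trop(Z)$ such that the closure $\overline Z\subseteq X_\Sigma$ is a schön compactification in the sense of Tevelev; then $\overline Z$ is smooth and meets every torus orbit $\Orb(\sigma)\subseteq X_\Sigma$ transversely in a smooth stratum $Z_\sigma:=\overline Z\cap\Orb(\sigma)$, itself a schön subvariety of the orbit torus. The invariant $\chi_y^T(Z)$ is then assembled as a sum over cones $\sigma\in\Sigma$ meeting $\Trop(Z)$ nontrivially, each cone contributing a rational function of $y$ built from the polytope classes $[\Delta_\sigma]\in\mL(\Z^n)$ attached to the initial degenerations $\ini_\sigma Z$. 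The construction is designed so that it lands in $\tfrac{1}{(y-1)^n}\mL(\Z^n)[y]$, specializes under $\Lat$ to the Danilov--Khovanski\u\i\ expansion of $\chi_y(Z)$, and reduces on a generic hypersurface with Newton polytope $\Delta$ to $\frac{1-[\Delta]}{1-[\Delta]y}$. Independence from the choice of $\Sigma$ must be checked separately, by reducing to a combinatorial identity in $\mL(\Z^n)(y)$ under subdivision of cones.

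For \textbf{(i)}, the motivic additivity of $\chi_y$ over the locally closed stratification $\overline Z=\bigsqcup_\sigma Z_\sigma$ is the key ingredient: after applying $(y-1)^n\Lat$ to the defining sum and collecting terms, one recognizes the Danilov--Khovanski\u\i\ formula for $\chi_y(Z)$. For \textbf{(ii)}, the sum is evaluated directly for $Z=V(f)$ generic with Newton polytope $\Delta$: taking $\Sigma$ to be the normal fan of $\Delta$, each stratum $V(f)_\sigma\subseteq\Orb(\sigma)$ is itself a generic hypersurface with Newton polytope the face $\Delta_\sigma$ of $\Delta$ dual to $\sigma$, and the face-lattice combinatorics of $\Delta$ telescope the sum to $\frac{1-[\Delta]}{1-[\Delta]y}$. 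For \textbf{(iii)}, the hypothesis $\overline{Z\cap Z'}=\overline Z\cap\overline Z'$ in a sufficiently fine toric variety furnishes a common schön compactifying fan in which $(Z\cap Z')_\sigma = Z_\sigma\cap Z'_\sigma$ transversally in $\Orb(\sigma)$; multiplicativity of $\chi_y$ under transverse intersection inside a smooth ambient, combined with the identity $[\Delta_\sigma(Z)+\Delta_\sigma(Z')]=[\Delta_\sigma(Z)]\cdot[\Delta_\sigma(Z')]$ in $\mL(\Z^n)$, then gives the product formula term-by-term.

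The main obstacle will be well-definedness of $\chi_y^T(Z)$ under refinement of the compactifying fan. Any two admissible fans admit a common refinement, and one must show that subdividing a cone $\sigma$ into $\sigma_1,\dots,\sigma_r$ leaves the defining sum unchanged in $\mL(\Z^n)(y)$. This is a nontrivial combinatorial identity among rational functions of polytope classes, the analogue in $\mL(\Z^n)(y)$ of the local invariance of tropical intersection numbers. Once fan-independence is established, parts (i)--(iii) follow cleanly from motivic additivity, induction on codimension, and transversality as sketched above.
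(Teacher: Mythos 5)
Your proposal sketches a plan rather than a construction: you say the invariant is \emph{``assembled as a sum over cones $\sigma\in\Sigma$ \ldots\ each cone contributing a rational function of $y$ built from the polytope classes $[\Delta_\sigma]$ attached to the initial degenerations $\ini_\sigma Z$''}, but you never write down what that rational function is. For a schön $Z$ of codimension $>1$ the initial degeneration $\ini_\sigma Z$ is not a hypersurface, so there is no single Newton polytope $\Delta_\sigma$ to point to, and it is precisely here that the whole difficulty of the theorem lives. Without a concrete formula, part (ii) cannot ``telescope,'' part (i) cannot ``collect terms,'' and the promised independence of the fan is not a nontrivial identity you have deferred --- it is an identity about an object you have not defined. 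The Danilov--Khovanski\u\i\ formula you invoke for (i) is also only available for complete intersections, not for arbitrary schön $Z$, so it cannot serve as the target of the claimed collection of terms.

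There is also a concrete false step in your argument for (iii): you appeal to \emph{``multiplicativity of $\chi_y$ under transverse intersection inside a smooth ambient.''} The $\chi_y$-genus is multiplicative for \emph{products} $X\times Y$, not for transverse intersections $Y\cap Z$ inside a smooth $X$; indeed, the entire point of Theorem~\ref{thm:main theorem}~(iii) is that the \emph{relative refined} $\chi_y^T$ is the quantity that becomes multiplicative under transverse intersection, which is not a formal consequence of any multiplicativity of $\chi_y$ itself.

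The paper's route is genuinely different and avoids all of this. One \emph{defines} $\chi_y^X(\overline Z)$ in one stroke as $\bI_X\circ i_*\bigl[\bigl(y^k\lambda_{-y^{-1}}[\mN_{\overline Z/X}^\vee]\bigr)^{-1}\bigr]$, using Morelli's isomorphism $\bI_X\colon K(X)\to\mL(M)$ for smooth complete toric $X$. Independence of $X$ and part~(iii) then follow from Tor-independence and the base-change formula $f^*i_*=i'_*g^*$ in $K$-theory (\cite[III~1.5.1, IV~3.1.1]{SGA6}); part~(i) follows from the conormal sequence for log-cotangent bundles together with the triviality of $\Omega^1_X(\log D)$ and an inductive computation of $\chi_y(X\setminus D)$ via Euler characteristics of $\Omega^p_X(\log D)$; part~(ii) is a short explicit computation with $\mN_{D/X}^\vee=\mO_D(-D)$. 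If you want to pursue a strata-by-strata construction along your lines you would first have to reconstruct this $K$-theoretic definition in orbit-by-orbit form, at which point you would be reproving, not bypassing, the paper's argument.
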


To prove the theorem we use Morelli's description of the Grothendieck rings of vector bundles on smooth toric varieties as subrings of the ring of lattice polytopes \cite{Mor93}. This description makes it natural to apply the tropical version of the Chern character to $\chi_y^T(Z)$ and thereby obtain a tropical cycle having rational functions in $y$ with $\Q$-coefficients as weights. We will call this cycle the \emph{refined tropicalization} of $Z$ and denote it by $\Trop_y(Z)$. Since the Chern character is a ring homomorphism, part (iii) of Theorem \ref{thm:main theorem} immediately translates to the statement that the refined tropicalization of a generic intersection equals the intersection (product) of the refined tropicalizations, that is
\begin{equation*}
\Trop_y(Z\cap Z')= \Trop_y(Z)\cdot \Trop_y(Z')
\end{equation*}
for $Z$ and $Z'$ intersecting generically. This behaviour is analagous to the unrefined case \cite{OP13}, and in fact we prove that the top-dimensional part of $\Trop_y(Z)$ specializes to the unrefined tropicalization $\Trop(Z)$ up to sign by setting $y=0$. More precisely, we prove in Proposition \ref{prop:Refined Trop specializes to unrefined Trop} that there is an equality 
\begin{equation*}
|\Trop_y(Z)|=|\Trop(Z)|
\end{equation*} 
of supports and we have
\begin{equation*}
\Trop_0(Z)^\top=(-1)^k\Trop(Z) \ , 
\end{equation*}
where $k$ is the codimension of $Z$ and $\Trop_y(Z)^\top$ denotes the top-dimensional component of $\Trop_y(Z)$. We conclude the paper by explaining how Hirzebruch-Riemann-Roch allows one to recover $\chi_y(Z)$ from $\Trop_y(Z)$ by integrating it with respect to a Todd measure $\mu$. This is the translation of the first part of Theorem \ref{thm:main theorem} which, in the notation of Corollary \ref{cor: integral over refined trop yields chiy} reads
\begin{equation*}
\chi_y(Z)=\chi_y(T)\cdot \int \Trop_y(Z) \,d\mu \ .
\end{equation*}

\paragraph*{Acknowledgment} The author would like to thank Johannes Nicaise and Sam Payne for helpful conversations. This research was  supported by the ERC Starting Grant MOTZETA (project 306610) of the European Research Council (PI: Johannes Nicaise).

\section{Preliminaries}
We quickly recall the main results and definitions needed in this paper. Throughout this paper, we will work over an algebraically closed field $\kappa$ of characteristic $0$.

\subsection*{The $\chi_y$-genus}
If $X$ is a smooth projective variety, its $\chi_y$-genus can be defined as 
\begin{equation*}
\chi_y(X)=\sum_{p=0}^{\dim X} (-1)^p \chi(X,\Omega^p_X)y^p \ .
\end{equation*}
It can be shown that there is a unique way to extend this definition to all algebraic schemes such that $\chi_y$ is additive with respect to stratifications, that is there is a unique way to assign to every algebraic scheme $X$ a polynomial $\chi_y(X)\in \Z[y]$ such that for smooth projective varieties the above formula holds and for every closed immersion $X\to Y$ we have
\begin{equation*}
\chi_y(X)=\chi_y(Y)+\chi_y(X\setminus Y) \ .
\end{equation*}
The $\chi_y$-genera are also multiplicative, that is $\chi_y(X\times Y)=\chi_y(X)\cdot \chi_y(Y)$ for all algebraic schemes $X$ and $Y$ (the product of course taken over the base field), although we will not need this here.

\subsection*{The Grothendieck groups $K^0$ and $K_0$}
For any scheme $X$ 	one can define the Grothendieck group of vector bundles $K^0(X)$. It is the free abelian group generated by all isomorphism classes of vector bundles, modulo the relations $[\mF]=[\mF']+[\mF'']$, whenever $\mF$ is an extension of $\mF'$ by $\mF''$ (here, $[\mF]$ denotes the generator associated to the isomorphism class of $\mF$). There is a natural ring structure on $K^0(X)$ defined by the product law $[\mF]\cdot[\mG]=[\mF\otimes_{\mO_X} \mG]$, making $K^0(X)$ a commutative ring with unit $[\mO_X]$. In fact, $K^0(X)$ even is a $\lambda$-ring: for every vector bundle $\mF$ we can define a polynomial $\lambda_t(\mF)=\sum_k [\bigwedge\nolimits^k\mF]y^k\in K^0(X)[y]$. This extends uniquely to a morphism of abelian groups $K^0(X)\to 1+ yK^0(X)[[y]]$ satisfying various other good properties which we will not need.

Similarly, one can define the Grothendieck group of coherent sheaves $K_0(X)$, which is the free abelian group generated by all isomorphism classes of coherent sheaves, again modulo the relations coming from extension. Tensoring with an arbitrary coherent sheaf is not exact, so $K_0(X)$ is not a ring, but it is easily seen to be a $K^0(X)$-module. 

The Grothendieck groups are universal among pairs of abelian groups and maps from the set of isomorphism classes of finite rank locally free / coherent sheaves which are additive on exact sequences into them. In particular, as taking Euler characteristics of coherent sheaves on projective varieties is additive, for projective $X$ there is a unique function $\chi\colon K_0(X)\to \Z$ such that $\chi[\mF]=\chi(X,\mF)$ for every coherent sheaf $\mF$.

For any morphism $f\colon X\to Y$ of schemes, there is a map pull-back $f^*\colon K^0(Y)\to K^0(X)$ mapping $[\mF]$ to $[f^*\mF]$. This is a morphisms of rings. If $f$ is proper, there also is a push-forward $f_*\colon K_0(X)\to K_0(Y)$ mapping $[\mF]$ to $\sum_i (-1)^i[R^i f_*\mF]$. This is a morphism of $K^0(X)$-modules, or equivalently, there is a projection formula.  We will only need the push-forward for closed embeddings, in which case the push-forward has the simple form $f_*[\mF]=[f_*\mF]$.

There is a canonical morphism
\begin{equation*}
K^0(X)\to K_0(X)
\end{equation*}
of $K^0(X)$-modules, sending $[\mF]$ to $[\mF]$ for every finite rank locally free sheaf $\mF$. For smooth varieties, this is always an isomorphism.
Since we only consider smooth varieties in this note, we will not distinguish between $K^0(X)$ and $K_0(X)$ and simply write $K(X)$

\subsection*{The Grothendieck ring of smooth complete toric varieties}
\label{subsec:KX for toric variety}
The Grothendieck ring $K(X)$ of a smooth complete toric variety $X$ can be described with polyhedral geometry. Let $T$ be an algebraic torus and with character lattice $M$, and let $X$ be a toric variety with big open torus $T$. In \cite{Mor93}, Morelli constructs an injective ring homomorphism  $\bI_X\colon K(X)\to\mL(M)$ into the ring of lattice polytopes. The underlying abelian group of $\mL(M)$ is the free abelian group on all lattice polytopes in $M_\R=M\otimes_\Z \R$, modulo the scissors relations $[P]+[Q]=[P\cup Q]+[P\cap Q]$, whenever $P$, $Q$, $P\cup Q$, and $P\cap Q$ are all lattice polytopes, and the translation relations $[P+m]=[P]$ for all lattice polytopes $P$ and $m\in M$. Here, $[P]$ denotes the generator of $\mL(M)$ corresponding to the lattice polytope $P$. The multiplication on $\mL(M)$ is given by the Minkowski sum, that is $[P]\cdot [Q]=[P+Q]$. We will not need a precise description of $\bI_X$, besides the following three facts:
\begin{itemize}
\item If $D$ is an ample torus-invariant divisor on $X$ with associated polytope $P_D$, then $\bI_X[\mO_X(D)]=[P_D]$.

\item For every coherent sheaf $\mF$ on $X$ we have $\chi(X,\mF)= \Lat \circ \bI_X[\mF]$, where $\Lat\colon \mL(M)\to \Z$ is the group homomorphism sending $[P]$ to its number $|P\cap M|$ of lattice points.

\item The morphisms $\bI_X$ are functorial with respect to toric morphisms. In particular, whenever $f\colon X'\to X$ is a toric modification from another smooth complete toric variety $X'$ with big open torus $T$, we have $\bI_{X'}\circ f^*(\alpha)=\bI_X(\alpha)$ for all $\alpha\in K(X)$.
\end{itemize}

\subsection*{Schön subvarieties of tori}

Let $Z$ be a subvariety of an algbraic torus $T$. Then for every toric variety $X$ with big open torus $T$ we can consider the closure $\overline Z$ in $X$. The pair $(X,Z)$ is called tropical, if $\overline Z$ is proper and the multiplication map $T\times \overline Z\to X$ is faithfully flat. Tevelev showed in \cite{Tev07} that for given $Z$, there always exists a toric variety $X$ such that $(X,Z)$ is tropical. Furthermore, he showed that the set of such $X$ is stable under toric modification. However, except in trivial cases $X$ can never be proper itself. So let us call a pair $(X,Z)$ \emph{almost tropical} if $\overline Z$ is proper and the multiplication map is  flat, but not necessarily surjective. Then again, the set of all $X$ such that $(X,Z)$ is almost tropical is closed under toric modification of $X$, and therefore we can always choose $X$ to be smooth and projective.

A subvariety $Z$ of $T$ is called schön, if there \emph{exists} a toric variety $X$ such that $(X,Z)$ is tropical, and the multiplication map is smooth. By \cite[Thm.\ 1.4]{Tev07}, this implies that the multiplication map is smooth for \emph{all} choices of $X$ for which $(X,Z)$ is almost tropical. If $Z$ is schön and $(X,Z)$ is almost tropical, then we also say that $\overline Z$ is a schön subvariety of $X$.

\section[The relative refined chi-y-genus]{The relative refined $\chi_y$-genus}
\label{sec:results}

As we have already seen, the $\chi_y$-genus of a smooth projective variety can be computed in terms of Euler characteristics of its sheaves of forms. Since we are mainly interested in non-projective varieties, it will be convenient to have a formula for the $\chi_y$-genus of a non-proper variety involving Euler characteristics of bundles on a suitable compactification.

\begin{prop}
\label{prop:formula for chiy in terms of log cotangent complex}
Let $D$ be a simple normal crossings divisor on a smooth projective variety $X$ of pure dimension $n$. Then we have the equality
\begin{equation*}
\chi_y(X\setminus D) = \sum_{k=0}^n (-1)^{n-k}\chi\big(X, \Omega_X^{n-k}(\log D) \big)y^k \ .
\end{equation*}
\end{prop}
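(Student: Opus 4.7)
The plan is to induct on the number $s$ of irreducible components of $D$, using the Poincaré residue short exact sequence to peel off one component at a time. Throughout, let $\tilde{\chi}_y(X,D)$ denote the right-hand side of the proposition.

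For the base case $s = 0$, so $D = \emptyset$ and $X\setminus D = X$, the statement reduces to Serre duality: the identity $\chi(X,\Omega_X^{n-k}) = (-1)^n\chi(X,\Omega_X^k)$ immediately converts $\tilde{\chi}_y(X,\emptyset)$ into $\sum_k (-1)^k\chi(X,\Omega_X^k)y^k$, which is $\chi_y(X)$ by the definition recalled in the preliminaries.

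For the inductive step I would pick a smooth irreducible component $D_1$ of $D$, set $D' := D - D_1$ and $D'' := D'|_{D_1}$, and feed the Poincaré residue sequence
\[
0 \to \Omega_X^p(\log D') \to \Omega_X^p(\log D) \to \iota_{1*}\Omega_{D_1}^{p-1}(\log D'') \to 0
\]
into $\tilde{\chi}_y(X,D)$. Taking Euler characteristics (using that pushforward by a closed immersion preserves $\chi$) and collecting, the left term contributes $\tilde{\chi}_y(X,D')$; the residue term, after reindexing against the fact that $D_1$ has dimension $n-1$, contributes exactly $-\tilde{\chi}_y(D_1,D'')$. The crucial minus sign comes from comparing $(-1)^{n-k}$ against $(-1)^{(n-1)-k}$ after the degree shift $p \mapsto p-1$. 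Thus
\[
\tilde{\chi}_y(X,D) \;=\; \tilde{\chi}_y(X,D') \;-\; \tilde{\chi}_y(D_1,D'').
\]
On the other side, the stratification $X\setminus D' = (X\setminus D)\sqcup(D_1\setminus D'')$ together with additivity of $\chi_y$ yields the parallel recursion $\chi_y(X\setminus D) = \chi_y(X\setminus D') - \chi_y(D_1\setminus D'')$. Since both $(X,D')$ and $(D_1,D'')$ have strictly fewer boundary components than $(X,D)$, the inductive hypothesis applies and matches the two recursions.

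The delicate point of the argument will be the sign bookkeeping in the inductive step, especially verifying that the degree shift $p\mapsto p-1$ in the Poincaré residue produces precisely the $-1$ needed to mirror the additivity relation for $\chi_y$. Everything else—Serre duality in the base case, exactness of $\chi$ on the residue sequence, and the disjoint-union description of $X\setminus D'$—is routine. An alternative one-shot proof could be obtained by invoking Deligne's weight filtration to expand $[\Omega_X^p(\log D)]$ in $K(X)$ as $\sum_I [\iota_{I*}\Omega_{D_I}^{p-|I|}]$ and comparing with the inclusion–exclusion expansion of $\chi_y(X\setminus D)$ via Serre duality on each stratum $D_I$, but the inductive route above keeps the combinatorics lighter.
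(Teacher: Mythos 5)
Your approach is the same as the paper's (Poincaré residue sequence, base case via Serre duality, matching the recursion against additivity of $\chi_y$), but your induction scheme has a genuine gap. You induct only on the number $s$ of irreducible components of $D$, asserting that both $(X,D')$ and $(D_1,D'')$ have strictly fewer boundary components than $(X,D)$. For $(X,D')$ this is true, but for $(D_1,D'')$ it need not be: $D'' = D'|_{D_1}$ is the sum of the smooth divisors $D_i\cap D_1$ for $i\geq 2$, and each such $D_i\cap D_1$ can be disconnected (e.g.\ in $\mathbb{P}^2$ take $D_1$ a smooth conic and $D_2$ a line meeting it transversally in two points, so $D_2|_{D_1}$ already has two irreducible components). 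Thus $D''$ may have more than $s-1$ components and your induction does not close.

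The fix is the one the paper uses: induct lexicographically, first on the dimension $n$ and then on the number of components. When you peel off $D_1$, the pair $(D_1,D'')$ lives in dimension $n-1$, so the outer induction on $n$ applies regardless of how many components $D''$ acquires, while the pair $(X,D')$ is handled by the inner induction on component count. Once you amend the induction scheme, the rest of your argument — the residue sequence, the sign bookkeeping under the shift $p\mapsto p-1$, and the $\chi_y$-additivity on $X\setminus D' = (X\setminus D)\sqcup(D_1\setminus D'')$ — is correct and matches the paper.
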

\begin{proof}
We proof this by induction, first on $n$, then on the number $l$ of irreducible components of $D$. For $n=0$ the statement is trivial, so assume $n>0$. If $l=0$, we have $\Omega_X^1=\Omega_X^1(\log D)$ and the formula follows from Serre duality. So assume $l>0$. Let $D_1$ be an irreducible component of $D$. For every $p>0$ we have a short exact sequence (see \cite[Property 2.3 b)]{EV92})
\begin{equation*}
0\to \Omega^p_X(\log(D-D_1)) \to \Omega^p_X(\log D)\to \Omega^{p-1}_{D_1}(\log(D-D_1)|_{D_1})\to 0 \ .
\end{equation*}
This yields 
\begin{multline*}
\sum_{k=0}^n (-1)^{n-k}\chi\big(X, \Omega_X^{n-k}(\log D) \big)y^k= \\
=\sum_{k=0}^n (-1)^{n-k}\chi\big(X,\Omega^{n-k}_X(\log (D-D_1))\big) \\
- \sum_{k=0}^{n-1} (-1)^{(n-1)-k}\chi\big(D_1,\Omega^{(n-1)-k}_{D_1}(\log(D-D_1)|_{D_1})\big)
\end{multline*}
which is, by induction, equal to
\begin{equation*}
\chi_y(X\setminus (D-D_1))- \chi_y(D_1\setminus (D-D_1)) =\chi_y(X\setminus D) \ ,
\end{equation*}
where the last equality follows from the additivity of $\chi_y$.
\end{proof}

Motivated by the formula for the $\chi_y$-genus in the preceding proposition and the conormal sequence for the log-cotangent bundles, we make the following definition. It is not very enlightening by itself, but we will quickly see that it has the properties we desire.

\begin{defn}
Let $Z$ be a codimension-$k$ schön subvariety of a smooth projective toric variety $X$. Let $\mN_{Z/X}^\vee$ be the conormal bundle on $Z$, and let $i\colon Z\to X$ be the inclusion. We define
\begin{multline*}
\chi_y^X(Z) \coloneqq \bI_X\circ i_*\bigg[\Big(y^k\lambda_{-y^{-1}}[\mN_{Z/X}^\vee]\Big)^{-1}\bigg]=\\
= \bI_X\circ i_*\left[ \left(\sum_{i=0}^{n} (-1)^{k-i}\big[\bigwedge\nolimits^{k-i}\mN_{Z/X}^\vee\big]\right)^{-1}\right] \in \mL(M)[[y]] \ .
\end{multline*}
\end{defn}

The following proposition shows that the relative refined $\chi_y$-genus deserves its name. 

\begin{prop}
\label{prop:chiy deserves its name}
Let $Z$ be a codimension-$k$ schön subvariety of a smooth projective toric variety $X$ with big open torus $T\cong \G_m^n$. Then $(y-1)^n\chi_y^X(Z)$ is a polynomial of degree $n-k$, and  we have
\begin{equation*}
 \chi_y(Z\cap T) = \chi_y(T) \Lat\left(\chi_y^X(Z)\right)=(y-1)^n \Lat\left(\chi_y^X(Z)\right) \ .
\end{equation*}
\end{prop}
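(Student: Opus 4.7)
My plan is to reduce both assertions to a polynomial identity in $K(Z)[y]$ extracted from the logarithmic conormal sequence, then transport that identity to $\mL(M)[y]$ via Morelli's map $\bI_X$ and match the resulting Euler characteristic with Proposition \ref{prop:formula for chiy in terms of log cotangent complex}. Concretely, I would set $Z_0 := Z \cap T$ and let $D_Z := Z \setminus Z_0$ be the restriction of the toric boundary $D_X := X \setminus T$. The schön hypothesis, combined with the smoothness of $X$, guarantees that $Z$ is smooth and $D_Z$ is a simple normal crossings divisor on $Z$; Proposition \ref{prop:formula for chiy in terms of log cotangent complex} then yields
\begin{equation*}
\chi_y(Z_0) = \sum_{p=0}^{n-k} (-1)^p \chi\bigl(Z, \Omega_Z^p(\log D_Z)\bigr)\, y^{n-k-p}.
\end{equation*}

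Next I would invoke the log-conormal sequence
\begin{equation*}
0 \to \mN_{Z/X}^\vee \to \Omega_X^1(\log D_X)|_Z \to \Omega_Z^1(\log D_Z) \to 0,
\end{equation*}
whose exactness I expect to be the main technical ingredient; it is the essential use of the schön condition and rests on the smoothness of the multiplication map $T \times Z \to X$. Since the smooth toric variety $X$ has $\Omega_X^1(\log D_X) \cong M \otimes_{\Z} \mO_X$ free of rank $n$, applying $\lambda_t$ to the resulting identity in $K(Z)$ gives
\begin{equation*}
(1 + t)^n = \lambda_t[\Omega_Z^1(\log D_Z)] \cdot \lambda_t[\mN_{Z/X}^\vee]
\end{equation*}
in $K(Z)[t]$. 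Substituting $t = -y^{-1}$, multiplying by $y^n$, and rearranging then yields
\begin{equation*}
(y-1)^n \bigl(y^k \lambda_{-y^{-1}}[\mN_{Z/X}^\vee]\bigr)^{-1} = y^{n-k} \lambda_{-y^{-1}}[\Omega_Z^1(\log D_Z)],
\end{equation*}
which is manifestly a polynomial of degree $n-k$ in $K(Z)[y]$.

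Finally, pushing this identity forward under $i_*$ and applying the ring homomorphism $\bI_X$ shows that $(y-1)^n \chi_y^X(Z) \in \mL(M)[y]$ is a polynomial of degree $n-k$, which settles the polynomial assertion. Applying $\Lat$ and combining Morelli's property $\chi(X, -) = \Lat \circ \bI_X$ with $\chi(Z, -) = \chi(X, i_*-)$ for the closed immersion $i$ converts the right-hand side precisely into the expression for $\chi_y(Z_0)$ from the first paragraph. The remaining identification $\chi_y(T) = (y-1)^n$ is standard, following from additivity and multiplicativity of $\chi_y$ applied to $\G_m \cong \Pj^1 \setminus \{0,\infty\}$. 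Aside from the exactness of the log-conormal sequence, everything downstream is formal manipulation in $K(Z)$ and $\mL(M)$.
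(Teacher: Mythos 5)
Your proof is correct and follows essentially the same route as the paper: reduce to the log-conormal exact sequence, exploit the triviality of $\Omega_X^1(\log D)$ to produce the factor $(y-1)^n$, push forward, apply $\bI_X$ and $\Lat$, and match against Proposition \ref{prop:formula for chiy in terms of log cotangent complex}. The only cosmetic difference is that you multiply through by $(y-1)^n$ already in $K(Z)$ before pushing forward, whereas the paper pushes forward first and invokes the projection formula; the two are equivalent here since $(y-1)^n$ is a scalar.
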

\begin{proof}
Let $D$ denote the boundary $X\setminus T$. It is a simple normal crossings divisor, and since $Z$ is schön, the same is true for $D|_Z$. For the conormal sequence for the log cotangent bundles 
\begin{equation*}
0\to \mN_{Z/X}^\vee \to i^*\Omega^1_X(\log D) \to \Omega^1_Z(\log D|_Z) \to 0 \ ,
\end{equation*}
where $i\colon Z\to X$ is the inclusion map, we deduce that 
\begin{equation*}
\left(y^k\lambda_{-y^{-1}} [\mN_{Z/X}^\vee]\right)^{-1}\cdot y^{n-k} i^*\left(\lambda_{-y^{-1}}[\Omega^1_X(\log D)]\right)  =  y^k \lambda_{-y^{-1}}[\Omega^1_Z(\log D|_Z)]	\ .
\end{equation*}
Taking $i_*$ of both sides and using the projection formula yields
\begin{equation*}
i_* \left[\left(y^{k}\lambda_{-y^{-1}} [\mN_{Z/X}^\vee]\right)^{-1}\right]\cdot y^n \lambda_{-y^{-1}}[\Omega^1_X(\log D)]  
= i_*\left[ y^{n-k} \lambda_{-y^{-1}}[\Omega^1_Z(\log D|_Z)\right] \ .
\end{equation*}
But $\Omega^1_X(\log D)$ is trivial of rank $n$ \cite[Example 8.1.2]{CLS}, and hence
\begin{equation*}
y^n\lambda_{-y^{-1}}[\Omega^1_X(\log D)]=y^n (\lambda_{-y^{-1}}[\mO_X])^n=y^n(1-y^{-1})^n=(y-1)^n \ .
\end{equation*}
It follows that 
\begin{equation}
\label{equation 1}
(y-1)^n \cdot i_* \left[\left(y^{k}\lambda_{-y^{-1}} [\mN_{Z/X}^\vee]\right)^{-1}\right]=
i_*\left[ y^{n-k} \lambda_{-y^{-1}}[\Omega^1_Z(\log D|_Z)\right] \ ,
\end{equation}
which clearly is a polynomial of degree $n-k$ in $K_0(X)[y]$. As $\bI_X$ is a ring homomorphism, the first claim follows. For the second claim we recall that for any vector bundle $\mF$ on $Z$ we have $\Lat(\bI_X(i_*[\mF]))=\chi(Z,\mF)$. Thus, applying $\Lat\circ \bI_X$ to both sides of equation \ref{equation 1} yields
\begin{equation*}
\Lat\left((y-1)^n\chi_y^X(Z)\right)=\sum_{i=0}^{n-k} (-1)^{n-k-i} \chi\big(Z,\Omega^{n-k-i}_Z(\log D|_Z)\big) \ ,
\end{equation*}
which is equal to $\chi_y(Z\cap T)$ by Proposition \ref{prop:formula for chiy in terms of log cotangent complex}. Because $\Lat$ is $\Z$-linear, we may pull $(y-1)^n$ to the front, finishing the proof.
\end{proof}

Until now, the relative refined $\chi_y$-genus is defined for schön subvarieties of smooth projective toric varieties and not for schön subvarieties of tori. The following proposition shows that to define it for a schön subvariety of an algebraic torus, we can simply take its closure in a suitable toric variety and then take the relative refined $\chi_y$-genus of the compactification.

\begin{prop}
\label{prop:independence of chiy}
Let $Z$ be a schön subvariety of a torus $T$, and let $X$ and $X'$ be two smooth projective toric varieties such that both $(X,Z)$ and $(X',Z)$ are almost tropical. Let $\overline Z$ and $\overline Z'$ denote the closures of $Z$ in $X$ and $X'$, respectively. Then we have
\begin{equation*}
\chi_y^X(\overline Z)=\chi_y^{X'}(\overline Z') \ .
\end{equation*}
\end{prop}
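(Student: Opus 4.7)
The plan is to reduce to a single toric modification via a common refinement, and then combine $K$-theoretic base change along the resulting cartesian square of closures with the functoriality of Morelli's map $\bI$.

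\emph{Reduction.} Because the class of smooth projective toric varieties $X$ with $(X,Z)$ almost tropical is closed under toric modifications, any common smooth projective refinement of the fans of $X$ and $X'$ yields a third such toric variety $X''$ admitting toric modifications to both. By transitivity it therefore suffices to show that $\chi_y^X(\overline Z)$ is invariant under a single toric modification $f\colon X'\to X$.

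\emph{Cartesian square.} Let $i\colon\overline Z\hookrightarrow X$ and $i'\colon\overline Z'\hookrightarrow X'$ denote the inclusions and $f'\colon\overline Z'\to\overline Z$ the morphism induced by $f$. The key technical step is to show that
\begin{equation*}
\begin{array}{ccc}
\overline Z' & \xrightarrow{i'} & X' \\
\scriptstyle f'\,\downarrow & & \downarrow\,\scriptstyle f \\
\overline Z & \xrightarrow{i} & X
\end{array}
\end{equation*}
is cartesian, i.e.\ $X'\times_X\overline Z=\overline Z'$. Using $T$-equivariance of $f$, one identifies $(T\times\overline Z)\times_X X'$ with $T\times(X'\times_X\overline Z)$; the flatness (indeed smoothness) of the multiplication $T\times\overline Z\to X$ provided by the schön condition then forces $T\times(X'\times_X\overline Z)\to X'$ to be smooth of the expected relative dimension, so that $X'\times_X\overline Z$ is smooth of pure dimension $\dim Z$ and contains no component strictly outside the closure $\overline Z'$ of $Z$ in $X'$. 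Since $\overline Z$ and $\overline Z'$ are smooth of codimension $k$ in $X$ and $X'$, the inclusions $i,i'$ are regular closed embeddings of equal codimension, which automatically makes the cartesian square Tor-independent and yields $(f')^*\mN^\vee_{\overline Z/X}=\mN^\vee_{\overline Z'/X'}$.

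\emph{Base change and functoriality.} For a Tor-independent cartesian square with horizontal regular embeddings, the $K$-theoretic base-change formula $f^*\circ i_*=i'_*\circ(f')^*$ applies. Feeding in the class $[(y^k\lambda_{-y^{-1}}[\mN^\vee_{\overline Z/X}])^{-1}]$ and using the identification of conormal bundles gives
\begin{equation*}
f^*\circ i_*\left[\bigl(y^k\lambda_{-y^{-1}}[\mN^\vee_{\overline Z/X}]\bigr)^{-1}\right]=i'_*\left[\bigl(y^k\lambda_{-y^{-1}}[\mN^\vee_{\overline Z'/X'}]\bigr)^{-1}\right].
\end{equation*}
Applying $\bI_{X'}$ and invoking the functoriality $\bI_{X'}\circ f^*=\bI_X$ recalled in the preliminaries, the left-hand side becomes $\chi_y^X(\overline Z)$ while the right-hand side is $\chi_y^{X'}(\overline Z')$ by definition.

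\emph{Main obstacle.} The only nontrivial ingredient is the verification that the square above is cartesian; here the almost-tropical hypothesis is used essentially, to rule out any extra components of $X'\times_X\overline Z$ supported in the toric boundary of $X'$. Once this is in place, the remaining steps are purely formal consequences of the base change formula and the functoriality of $\bI$.
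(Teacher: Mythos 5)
Your proof is correct and reaches the same intermediate target as the paper — the outer cartesian square
\begin{equation*}
\begin{array}{ccc}
\overline Z' & \xrightarrow{i'} & X' \\
\downarrow & & \downarrow \\
\overline Z & \xrightarrow{i} & X
\end{array}
\end{equation*}
being Tor-independent with $g^*\mN^\vee_{\overline Z/X}=\mN^\vee_{\overline Z'/X'}$, followed by the $K$-theoretic base change of \cite[IV 3.1.1]{SGA6} and the functoriality $\bI_{X'}\circ f^*=\bI_X$ — but you get there by a genuinely different route. The paper never proves cartesianness of the outer square head-on: it inserts the middle column $\overline Z\times T\to X$ and $\overline Z'\times T\to X'$, cites \cite[Prop.~2.5]{Tev07} for cartesianness of the two constituent squares, then observes that each is Tor-independent (the right one because the multiplication map is flat, the left one because it is a section of a smooth projection) and chains Tor-independence across the composite via \cite[III 1.5.1]{SGA6}. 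You instead re-derive Tevelev's cartesianness directly, using the $T$-equivariant shear to identify $(T\times\overline Z)\times_X X'$ with $T\times(X'\times_X\overline Z)$ so that flatness/smoothness of the multiplication map descends, and then get Tor-independence essentially for free from the fact that $i$ and $i'$ are regular embeddings of the \emph{same} codimension into a regular (hence Cohen--Macaulay) scheme. That last observation is a nice simplification over the paper's two-step Tor-independence argument. The one place where your write-up is thinner than it should be is the ``no extra components'' claim: smoothness and equidimensionality of $X'\times_X\overline Z$ alone do not immediately rule out boundary components; you also need to observe that any such component $W_1\subseteq X'\setminus T$ would be open and closed in $X'\times_X\overline Z$, so that $T\times W_1\to X'$ would still be flat and hence open, with image an open subset of $X'$ contained in the toric boundary — impossible. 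You flag this as the main obstacle, correctly, but this last step of the argument needs to be spelled out before the proof is complete.
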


\begin{proof}
The set of fans $\Sigma$ for which $X_\Sigma$ is smooth and projective and $(X_\Sigma,Z)$ is almost tropical is directed, so we may assume that $X'$ is a toric modification of $X$, i.e.\ there is a toric morphism $f\colon X'\to X$ induced by a refinement of fans. Denoting the induced morphism $\overline Z'\to \overline Z$ by $g$, and the inclusions of $\overline Z$ and $\overline Z'$ into $X$ and $X'$ by $i$ and $i'$, respectively, we have a diagram
\begin{center}
\begin{tikzpicture}
\matrix[matrix of math nodes, row sep= 5ex, column sep= 4em, text height=1.5ex, text depth= .25ex]{
|(Z')|  \overline Z'		& 
|(Z'xT)|  \overline Z'\times T 	&
|(X')| X'		\\
 |(Z)| \overline Z 	&
|(ZxT)| \overline Z\times T		& 
|(X)|	X\\
};
\begin{scope}[->,font=\footnotesize,auto]
\draw (Z') --node{$\id\times 1$} (Z'xT);
\draw (Z'xT) -- (X');
\draw (Z') to[out=35, in=145]node{$i'$} (X');
\draw (Z) -- node{$\id\times 1$} (ZxT);
\draw (ZxT) -- (X);
\draw (Z) to[out=-35, in=-145]node[swap]{$i$} (X);
\draw (Z')--node{$g$} (Z);
\draw (Z'xT)--node{$g\times \id$} (ZxT);
\draw (X')-- node{$f$}(X);
\end{scope}
\end{tikzpicture}
\end{center}
in which the two middle squares are cartesian by \cite[Prop.\ 2.5]{Tev07}. Since the multiplication maps are smooth, $\overline Z\times T$ and $X'$ are Tor-independent over $X$. Clearly, $\overline Z$ and $\overline Z'\times T$ are also Tor-independent over $\overline Z\times T$. Together, this yields that $\overline Z$ and $X'$ are Tor-independent over $X$ \cite[III 1.5.1]{SGA6}. By \cite[IV 3.1.1]{SGA6}, this implies that
\begin{equation*}
f^*i_*\alpha=i'_*g^*\alpha
\end{equation*} for all $\alpha\in K^0(\overline Z)$. Using that $g^*\mN_{\overline Z/X}^\vee=\mN_{\overline Z'/X'}^\vee$ and $\bI_{X'}\circ f^* = \bI_X$ finishes the proof.
\end{proof}

\begin{defn}
Let $Z$ be a schön subvariety of $T$. Then we define
\begin{equation*}
\chi^T_y(Z)\coloneqq\chi^X_y(\overline Z) \ ,
\end{equation*}
where $\overline Z$ is the closure of $Z$ in a smooth projective toric variety $X$ such that $(X,Z)$ is almost tropical. By Proposition \ref{prop:independence of chiy}, this is well-defined.
\end{defn}

Part (i) of Theorem \ref{thm:main theorem} now follows directly from Proposition \ref{prop:chiy deserves its name}.

\begin{cor}
\label{cor:chiy-formula for schoen subvarieties}
Let $Z$ be a schön subvariety of an algebraic torus $T$. Then $\chi_y(T)\ddot \chi_y^T(Z)$ is a polynomial of degree $\dim(Z)$, and we have
\begin{equation*}
\chi_y(Z)=\chi_y(T)\Lat\left(\chi_y^T(Z)\right) \ .
\end{equation*}
\end{cor}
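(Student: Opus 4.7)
The plan is to deduce this corollary directly from Proposition \ref{prop:chiy deserves its name} together with the definition of $\chi_y^T(Z)$; no new ingredients are needed. First I would invoke Tevelev's result (recalled in the subsection on schön subvarieties) to pick a smooth projective toric variety $X$ with big open torus $T$ such that $(X,Z)$ is almost tropical, and let $\overline Z$ denote the closure of $Z$ in $X$. By construction $\overline Z \cap T = Z$, the codimension $k$ of $\overline Z$ in $X$ equals the codimension of $Z$ in $T$, and $\dim(Z) = n - k$ where $n = \dim(T)$.

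The next step is to record that $\chi_y(T) = (y-1)^n$. Since $\chi_y(\Pj^1) = 1 + y$ (a direct computation from $\Omega^1_{\Pj^1} = \mO(-2)$) and $\chi_y$ is additive on stratifications and multiplicative on products, one obtains $\chi_y(\G_m) = y - 1$ and hence $\chi_y(T) = (y-1)^n$. Combined with the definition $\chi_y^T(Z) \coloneqq \chi_y^X(\overline Z)$, which is well-defined by Proposition \ref{prop:independence of chiy}, this lets us rewrite the conclusion of Proposition \ref{prop:chiy deserves its name} as
\begin{equation*}
\chi_y(Z) = \chi_y(\overline Z \cap T) = (y-1)^n \Lat\bigl(\chi_y^X(\overline Z)\bigr) = \chi_y(T)\,\Lat\bigl(\chi_y^T(Z)\bigr),
\end{equation*}
which is the second assertion. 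The degree claim is immediate from the same proposition: $(y-1)^n \chi_y^X(\overline Z) = \chi_y(T)\chi_y^T(Z)$ is a polynomial of degree $n - k = \dim(Z)$.

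There is no real obstacle to overcome here; the substantive content lies entirely in Proposition \ref{prop:chiy deserves its name} (which handles the case of a smooth projective toric compactification) and Proposition \ref{prop:independence of chiy} (which guarantees $X$-independence). The corollary is simply the translation of those two statements into the intrinsic language of schön subvarieties of tori, once one identifies $\chi_y(T) = (y-1)^n$.
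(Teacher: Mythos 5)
Your proof is correct and takes the same route as the paper, which simply states that the corollary "follows directly from Proposition \ref{prop:chiy deserves its name}"; you spell out the translation via Definition of $\chi_y^T(Z)$ and Proposition \ref{prop:independence of chiy}, plus the elementary computation $\chi_y(T)=(y-1)^n$ that is already implicit in the statement of Proposition \ref{prop:chiy deserves its name}.
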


The following proposition proves part (ii) of Theorem \ref{thm:main theorem}.

\begin{prop}
Let $M$ be a lattice, let $\Delta\subset M_\R$ be a lattice polytope, and let $f\in \kappa[M]$ be generic with Newton polytope $\Delta$, then 
\begin{equation*}
\chi_y^T(V(f))= \frac{1-[\Delta]}{1-[\Delta]y} \ ,
\end{equation*}
where $V(f)$ denotes the closed subscheme of $T=\Spec \kappa[M]$ defined by $f$.
\end{prop}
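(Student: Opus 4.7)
My plan is to compute $\chi_y^T(V(f))$ directly from its definition using a suitable smooth projective toric compactification, one Koszul short exact sequence, and a short projection formula calculation. First I would choose $X$ to be a smooth projective toric variety whose fan refines the normal fan of $\Delta$. For $f$ generic with Newton polytope $\Delta$, the pair $(X,V(f))$ is almost tropical by standard toric Bertini-type arguments, and the closure $Z \coloneqq \overline{V(f)} \subset X$ is a smooth hypersurface linearly equivalent to the torus-invariant divisor $D_\Delta$ associated to $\Delta$. Setting $L = \mO_X(Z)$ and $\ell = [L] \in K(X)$, Morelli's description together with the functoriality of $\bI_X$ under toric modifications yields $\bI_X(\ell) = [\Delta]$. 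The Koszul sequence $0 \to L^{-1} \to \mO_X \to i_*\mO_Z \to 0$, where $i\colon Z \to X$ is the inclusion, gives $i_*[\mO_Z] = 1 - \ell^{-1}$ in $K(X)$, and the conormal bundle class is $\nu \coloneqq [\mN_{Z/X}^\vee] = i^*\ell^{-1}$ in $K(Z)$.

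The heart of the proof is then a short computation in $K(Z)[[y]]$. Since $\nu$ is a line bundle class it is a unit in $K(Z)$, and $y - \nu = -\nu(1 - y\nu^{-1})$ is therefore invertible in $K(Z)[[y]]$, with
$$(y-\nu)^{-1} \;=\; -\nu^{-1}(1 - y\nu^{-1})^{-1} \;=\; -\sum_{k \geq 0} \nu^{-k-1}\, y^k.$$
Pushing forward term by term, and using $\nu^{-k-1} = i^*\ell^{k+1}$ together with the projection formula, gives $i_*[\nu^{-k-1}] = \ell^{k+1}\cdot i_*[\mO_Z] = \ell^{k+1} - \ell^{k}$, hence
$$i_*\bigl[(y-\nu)^{-1}\bigr] \;=\; \sum_{k \geq 0} (\ell^k - \ell^{k+1})\, y^k \;=\; (1-\ell)\sum_{k \geq 0}\ell^k y^k \;=\; \frac{1-\ell}{1-\ell y}$$
in $K(X)[[y]]$. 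Applying the ring homomorphism $\bI_X$, which sends $\ell$ to $[\Delta]$, transports this to the desired identity $\chi_y^T(V(f)) = (1-[\Delta])/(1-[\Delta]y)$.

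The only real obstacle is the set-up rather than the algebra: one has to verify that $X$ can be chosen so that $(X,V(f))$ is almost tropical and $Z$ is smooth and lies in the linear system $|D_\Delta|$. For generic $f$ this is a standard consequence of the toric Bertini theorem. Once the setup is in place, the three bullet properties of $\bI_X$ recalled in the preliminaries reduce the remainder of the argument to the three-line power series manipulation above.
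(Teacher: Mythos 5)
Your proof is correct and follows essentially the same route as the paper: both identify $\mN_{Z/X}^\vee$ with the restriction of $\mO_X(-Z)$, expand $(y-\nu)^{-1}$ as a geometric series, and push forward term by term to get $\frac{1-\ell}{1-\ell y}$ before applying $\bI_X$. The only cosmetic difference is that you package the push-forward of $\nu^{-k-1}$ via the Koszul sequence and the projection formula, while the paper writes out the equivalent twisted ideal-sheaf sequences $0\to\mO_X(kD)\to\mO_X((k+1)D)\to\mO_D((k+1)D)\to 0$ directly.
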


\begin{proof}
Let $\Sigma$ be a smooth projective refinement of the inner normal fan of $\Delta$, and let $X=X_\Sigma$ denote the associated projective toric variety. Then the pair $(X_\Sigma, \overline {V(f)})$ is almost tropical by \cite[Thm.\ 1.5]{LQ11}. Denote $D=\overline{V(f)}$.  The conormal bundle $\mN_{D/X}^\vee$ is equal to $\mO_D(-D)$. Thus, we have
\begin{multline*}
\left( y\lambda_{-y^{-1}}[\mN_{D/X}^\vee]\right)^{-1}= \left(-[\mO_D(-D)]+[\mO_D]y\right)^{-1}=\\
= -[\mO_D(D)]\cdot\sum_{k=0}^\infty [\mO_D(kD)]y^k=-\sum_{k=0}^\infty[\mO_D((k+1)D)] \ .
\end{multline*}
Using the exact sequence
\begin{equation*}
0\to \mO_X(kD)\to \mO_X((k+1)D)\to \mO_D((k+1)D)\to 0
\end{equation*}
we see that the push-forward of $\left( y\lambda_{-y^{-1}}[\mN_{D/X}^\vee]\right)^{-1}$ to $X$ is equal to
\begin{equation*}
\sum_{k=0}^\infty \Big([\mO_X(kD)]-[\mO_X((k+1)D)]\Big)y^k =\frac{1-[\mO_X(D)]}{1-[\mO_X(D)]y} \ .
\end{equation*}
To finish the proof, we note that $\mO_X(D)$ is isomorphic to the toric vector bundle $\mO_X(\Delta)$, which is mapped to $[\Delta]$ by $\bI_X$.
\end{proof}

\begin{rem}
\label{rem:relative chiy of hypersurface as laurent polynomial in y-1}
Without the use of Corollary \ref{cor:chiy-formula for schoen subvarieties}, it is not immediate that $\frac{1-[\Delta]}{1-[\Delta]y}$ is contained in $\mL(M)[y,(y-1)^{-1}]$. To see this directly, we note that by \cite[Lemma 13]{McMullen89} and \cite[Thm.\ 9.1]{McMullen09} we have $(1-[\Delta])^{n+1}=0$ in $\mL(M)$, where $n=\rk M$. Therefore, $[\Delta]$ is a unit and we have
\begin{equation*}
\frac{1-[\Delta]}{1-[\Delta]y}=\frac{1-[\Delta]}{[\Delta](1-y)} \cdot \frac 1{\left(1-\frac{1-[\Delta]}{[\Delta](y-1)}\right)^{-1}}=-\sum_{i=1}^n  \left(\frac {[\Delta]^{-1}-1}{y-1}\right)^i \ .
\end{equation*}
\end{rem}

Finally, we prove part (iii) of Theorem \ref{thm:main theorem}. The formula for the $\chi_y$-genera of complete intersections in algebraic tori will be an immediate consequence.

\begin{prop}
Let $Y$ and $Z$ be two schön subvarieties of a torus $T$ intersecting generically, by which we mean that $Z\cap Z'$ is schön, and there is an equality  $\overline {Z\cap Z'} =\overline Z\cap \overline Z'$ in any toric variety with sufficiently fine fan. Note that the latter condition is equivalent to the existence of a toric variety $X$ such that all three pairs $(X,\overline Y)$, $(X,\overline Z)$, and $(X,\overline Y\cap \overline Z)$ are all almost tropical, and the equality $\overline {Z\cap Z'} =\overline Z\cap \overline Z'$ holds on $X$. Then we have
\begin{equation*}
\chi_y^T(Y\cap Z) =\chi_y^T(Y)\cdot\chi_y^T(Z) \ .
\end{equation*}
\end{prop}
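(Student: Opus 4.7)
The plan is to reduce the identity to one in $K(X)[[y]]$ for a suitable smooth projective toric variety $X$, and then extract it from Tor-independence, the projection formula, and the multiplicativity of $\lambda_t$ on exact sequences. Using the generic intersection assumption together with the stability under toric modification exploited in Proposition~\ref{prop:independence of chiy}, I would first choose a smooth projective toric $X$ such that all three pairs $(X,\overline Y)$, $(X,\overline Z)$, and $(X,W)$ with $W:=\overline{Y\cap Z}$ are almost tropical and the scheme-theoretic identity $W=\overline Y\cap\overline Z$ holds on $X$. Write $k=\codim\overline Y$, $l=\codim\overline Z$, denote the inclusions by $i_Y,i_Z,i_W$ into $X$ and by $j_Y\colon W\hookrightarrow\overline Y$, $j_Z\colon W\hookrightarrow \overline Z$, and set
\[
\alpha_Y:=\bigl(y^k\lambda_{-y^{-1}}[\mN_{\overline Y/X}^\vee]\bigr)^{-1},\qquad
\alpha_Z:=\bigl(y^l\lambda_{-y^{-1}}[\mN_{\overline Z/X}^\vee]\bigr)^{-1}.
\]
Since $\bI_X$ is a ring homomorphism it is enough to prove
\[
i_{W,*}\bigl[(y^{k+l}\lambda_{-y^{-1}}[\mN_{W/X}^\vee])^{-1}\bigr]=i_{Y,*}[\alpha_Y]\cdot i_{Z,*}[\alpha_Z]
\]
in $K(X)[[y]]$.

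The next step is to establish Tor-independence of $\overline Y$ and $\overline Z$ in $X$. All of $\overline Y,\overline Z,W$ are smooth (being schön), and the generic intersection hypothesis together with the scheme-theoretic equality $W=\overline Y\cap\overline Z$ should force the codimension identity $\codim W=k+l$, i.e.\ transversality. A local Koszul computation—the Koszul resolution of $\mathcal O_{\overline Y}$ restricted to $\overline Z$ remains acyclic as soon as the codimensions add—then gives Tor-independence, which upgrades to the base-change identity $i_Y^*i_{Z,*}[\alpha_Z]=j_{Y,*}j_Z^*[\alpha_Z]$. Two applications of the projection formula collapse the right-hand side into a single push-forward from $W$:
\[
i_{Y,*}[\alpha_Y]\cdot i_{Z,*}[\alpha_Z]=i_{W,*}\bigl[j_Y^*\alpha_Y\cdot j_Z^*\alpha_Z\bigr].
\]

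It remains to identify the bracket with $(y^{k+l}\lambda_{-y^{-1}}[\mN_{W/X}^\vee])^{-1}$. Transversality gives the conormal sequence $0\to j_Y^*\mN_{\overline Y/X}^\vee\to \mN_{W/X}^\vee\to \mN_{W/\overline Y}^\vee\to 0$ together with the canonical identification $\mN_{W/\overline Y}^\vee\cong j_Z^*\mN_{\overline Z/X}^\vee$, hence the class equality
\[
[\mN_{W/X}^\vee]=[j_Y^*\mN_{\overline Y/X}^\vee]+[j_Z^*\mN_{\overline Z/X}^\vee]
\]
in $K(W)$. Since $\lambda_t$ is multiplicative on short exact sequences and $y^{k+l}=y^ky^l$, this produces $j_Y^*\alpha_Y\cdot j_Z^*\alpha_Z=(y^{k+l}\lambda_{-y^{-1}}[\mN_{W/X}^\vee])^{-1}$, completing the identification.

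The main obstacle is the first genuine input: extracting transversality—and therefore Tor-independence—from the generic intersection hypothesis. Once $\codim W=k+l$ is in hand the rest of the argument is formal, relying only on base change, the projection formula, and the exponential law for $\lambda_t$. After the proposition is established, the corresponding formula for complete intersections in an algebraic torus follows immediately by induction, applying part (ii) of Theorem~\ref{thm:main theorem} to each hypersurface factor.
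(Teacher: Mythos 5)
Your proof follows essentially the same route as the paper's own: both arguments decompose the conormal bundle of $W=\overline{Y\cap Z}$ via transversality, invoke Tor-independence of $\overline Y$ and $\overline Z$ over $X$ to get the base-change identity from \cite[IV 3.1.1]{SGA6}, and then apply the projection formula twice to collapse the product $i_{Y,*}[\alpha_Y]\cdot i_{Z,*}[\alpha_Z]$ into a single push-forward from $W$. The only difference is ordering --- the paper expands the left side first and reduces to the abstract statement $k_*[(j')^*\alpha\cdot(i')^*\beta]=(i_*\alpha)\cdot(j_*\beta)$, whereas you collapse the right side and then match the $K$-theory classes --- but this is cosmetic. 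Your flagged concern about deducing $\codim W=k+l$ from the hypothesis is fair; the paper glosses over the same point by simply asserting that $\overline Y$ and $\overline Z$ ``meet properly,'' so this is not a defect of your proposal relative to the paper.
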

\begin{proof}
Let $X$ be a smooth projective toric variety satisfying the condition stated in the proposition.  Consider the diagram
\begin{center}
\begin{tikzpicture}
\matrix[matrix of math nodes, row sep= 5ex, column sep= 4em, text height=1.5ex, text depth= .25ex]{
						&
|(X)|   X		& 
						\\
|(Y)| \overline Y		&		
			&
|(Z)| \overline Z		\\
			
					&
|(YcZ)| \overline{Y\cap   Z}  \ ,	&
					\\
};	
\begin{scope}[->,font=\footnotesize,auto]
\draw (Y)--node{$i$} (X);
\draw (Z)--node[swap]{$j$} (X);
\draw (YcZ)--node{$k$} (X);
\draw (YcZ)--node[swap]{$i'$} (Z);
\draw (YcZ)--node{$j'$} (Y);
\end{scope}
\end{tikzpicture}
\end{center}
where $i$, $j$, $i'$, $j'$, and $k$ are the natural closed immersions.
Since $\overline Y$ and $\overline Z$ meet properly in the smooth variety $\overline{Y\cap Z}$ we have $\mN_{\overline{Y\cap Z}/X}^\vee=(j')^*\mN_{\overline Y/X}\oplus (i')^*\mN_{\overline Z/X}$. Thus, if $c$ and $d$ denote the codimension of $Y$ and $Z$ in $T$, respectively, we have
\begin{multline*}
\bI_X^{-1}\chi_y^T(Y\cap Z)=k_*\bigg[\Big(y^{c+d}\lambda_{-y^{-1}}[\mN_{\overline{Y\cap Z}/X}^\vee]\Big)^{-1}\bigg]=\\
=k_*\bigg[(j')^*\Big(y^c\lambda_{-y^{-1}}[\mN_{\overline Y /X}^\vee]\Big)^{-1}\cdot (i')^*\Big(y^d\lambda_{-y^{-1}}[\mN_{\overline Z /X}^\vee]\Big)^{-1}\bigg] \ .
\end{multline*}
We see that it suffices to show that 
\begin{equation*}
k_*\Big[(j')^*\alpha)\cdot(i')^*\beta\Big]=(i_*\alpha)\cdot (j_*\beta)
\end{equation*}
 for all $\alpha\in K(\overline Y)$ and $\beta\in K(\overline Z)$. Using the projection formula, we see that the left hand side of this identity is equal to $i_*(\alpha\cdot j'_*(i')^*\beta)$, whereas the right hand side is equal to $i_*(\alpha\cdot i^*j_*\beta)$. Therefore, it is sufficient to prove that $i^*j_*=j'_*(i')^*$. But because $\overline Y$ and $\overline Z$ intersect transversally, they are Tor-independent over $X$, so we are done by \cite[IV 3.1.1]{SGA6}.
\end{proof}

\begin{cor}
Let $M$ be a lattice, let $\Delta_1,\ldots, \Delta_k$ be lattice polytopes in $M_\R$, and let $f_1,\ldots ,f_k\in \kappa[M]$ be generic with $\Newt(f_i)=\Delta_i$. Then for the complete intersection $Z=V(f_1)\cap\ldots\cap V(f_k)$ in $T=\Spec\kappa[M]$ we have
\begin{equation*}
\chi_y^T(Z)= \prod_{i=1}^k \frac{1-[\Delta_i]}{1-[\Delta_i]y}\ .
\end{equation*}
In particular, we have 
\begin{equation*}
\chi_y(Z) = \Lat\left((t-1)^n\prod_{i=1}^k \frac{1-[\Delta_i]}{1-[\Delta_i]y}\right)\ .
\end{equation*}
\end{cor}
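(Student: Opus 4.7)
The natural approach is induction on $k$, using the previous proposition as the base case and the multiplicativity statement (part (iii) of Theorem \ref{thm:main theorem}) in the inductive step.

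For $k=1$, the formula $\chi_y^T(V(f_1))=\frac{1-[\Delta_1]}{1-[\Delta_1]y}$ is exactly the previous proposition. For the inductive step, set $Z'=V(f_1)\cap\cdots\cap V(f_{k-1})$, and write $Z=Z'\cap V(f_k)$. If we can show that $Z'$ and $V(f_k)$ intersect generically in the sense of part (iii), then multiplicativity yields
\begin{equation*}
\chi_y^T(Z)=\chi_y^T(Z')\cdot\chi_y^T(V(f_k))=\prod_{i=1}^{k-1}\frac{1-[\Delta_i]}{1-[\Delta_i]y}\cdot\frac{1-[\Delta_k]}{1-[\Delta_k]y},
\end{equation*}
by the inductive hypothesis combined with the case $k=1$.

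The main work is therefore verifying the genericity hypothesis at each stage: namely that $Z'$, $V(f_k)$, and $Z$ are each schön in $T$, and that there is a single smooth projective toric variety $X$ on which all three pairs are almost tropical and on which closures commute with intersection. For generic coefficients, a standard Bertini/Bernstein-type argument shows that the intersection of $V(f_1),\ldots,V(f_j)$ is schön for each $j\le k$: choose a smooth projective toric compactification $X$ whose fan refines all of the inner normal fans of the $\Delta_i$. On such an $X$ each divisor $\overline{V(f_i)}$ is linearly equivalent to a nef toric divisor, and by \cite[Thm.\ 1.5]{LQ11} the closures $\overline{V(f_i)}$ are schön hypersurfaces meeting each boundary stratum transversally; genericity of the $f_i$ further ensures that the intersection $\bigcap_{i=1}^j\overline{V(f_i)}$ is transverse along each boundary stratum and has no components at infinity, so it coincides with $\overline{Z'}$ (resp.\ $\overline Z$) and is schön.

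Once $\chi_y^T(Z)$ is identified with the asserted product, the second formula follows at once by applying Corollary \ref{cor:chiy-formula for schoen subvarieties}, which multiplies through by $\chi_y(T)=(y-1)^n$ and applies $\Lat$. The conceptually delicate step -- the only one requiring more than a formal manipulation -- is the transversality/genericity verification in the inductive step; everything else is mechanical given the results already established in the paper.
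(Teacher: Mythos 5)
Your proposal is correct and takes precisely the route the paper intends: the paper offers no separate proof, labeling the corollary an ``immediate consequence'' of the hypersurface case (part (ii)) and the multiplicativity under generic intersection (part (iii)), which is exactly the induction you carry out, with the genericity/transversality verification folded into the hypothesis ``generic with $\Newt(f_i)=\Delta_i$.'' Your closing appeal to Corollary \ref{cor:chiy-formula for schoen subvarieties} for the second displayed formula is also the intended step (note the $t$ in that display is a typo for $y$).
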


\section{Refined Tropicalizations}

Recall that for every smooth variety $X$ there is a the Chern character $\ch\colon K(X)_\Q\to A^*(X)_\Q$, which is an isomorphism of rings \cite{F89}. It defines a natural transformation between $K$ and $A^*$ seen as contravariant functors from the category of smooth varieties to the category of rings. In particular, for a given lattice $M$ with dual $N$ it induces an isomorphism $\injlim_\Sigma K(X_\Sigma)_\Q \to \injlim_\Sigma A^*(X_\Sigma)_\Q$, where the colimits are taken over all smooth complete fans in $N_\R$, and $X_\Sigma$ denotes the toric variety associated to $\Sigma$. From what we saw in section \ref{subsec:KX for toric variety}, the colimit $\injlim_\Sigma K(X_\Sigma)_\Q$ over the Grothendieck rings is isomorphic to $\mL(M)_\Q$. On the other hand, the colimit $\injlim_\Sigma A^*(X_\Sigma)_\Q$ over the Chow rings is known to be isomorphic to the tropical intersection ring $Z^*(N_\R)$ \cite{FS97,AR10,Katz12,Rau08}. The isomorphism $\injlim_\Sigma A^*(X_\Sigma)\to Z^*(N_\R)$ can be described by writing $Z^*(N_\R)$ as a colimit $\injlim_\Sigma \Mink^*(\Sigma)$, where $\Mink^*(\Sigma)$ is the subgroup of $\Z^\Sigma$ of so-called Minkowski weights. We refer to \cite{FS97} for the precise definition of $\Mink^*(\Sigma)$. Thus, the tropical intersection ring $Z^*(N_\R)$ consists of weighted fans modulo refinements. The isomorphism $\injlim_\Sigma A^*(X_\Sigma)\to Z^*(N_\R)$ can be easily described: it assigns to a cocycle $C\in A^*(X_\Sigma)$ the class of the fan $\Sigma$ with weights given by $\Sigma\ni \sigma \mapsto \int_{X_\Sigma} C \cdot [V(\sigma)]$, where $V(\sigma)$ denotes the closed torus-invariant subvariety of $X_\Sigma$ corresponding to $\sigma$ under the Orbit-Cone Correspondence.    

Combining the three ring isomorphisms mentioned above, we obtain an isomorphism 
\begin{equation*}
\ch^{\trop}\colon \mL(M)_\Q\to Z^*(N_\R)_\Q   
\end{equation*}
which we will call the \emph{tropical Chern character}. It maps the generator of $\mL(M)_\Q$ corresponding to a lattice polytope $P$ to $\exp\mT(P)=\sum_{k=1}^\infty \frac1{k!}\mT(P)^k$, where $\mT(P)$ denotes the tropical cycle corresponding to $P$ (see \cite{JY16} for details). 

\begin{rem}
The isomorphism $\ch^{\trop}$ has already been considered by Jensen and Yu \cite{JY16} building on a result of Fulton and Sturmfels \cite{FS97} (which in turn relied on McMullen's work), albeit with a slightly different domain. Instead of $\mL(M)_\Q$ they consider McMullen's polytope algebra $\Pi(M_\Q)$, which is defined similarly as $\mL(M)$, but instead of being generated by lattice polytopes in $M$, it is generated by \emph{all} polytopes in the $\Q$-vector space $M_\Q$. This becomes a $\Q$-algebra $\widetilde\Pi(M_\Q)$ after replacing the $0$-graeded piece, which is isomorphic to $\Z$, by $\Q$ \cite{McMullen89}. They then show that there is an isomorphism $\widetilde\Pi(M_\Q)\to Z^*(N_\R)_\Q$ sending the class of any polytope $P$ in $M_\Q$ to $\exp \mT(P)$. This is one way to see that the canonical morphism $\mL(M)_\Q\to \widetilde\Pi(M_\Q)$ is an isomorphism, although this can be seen more directly \cite{McMullen09}. 
\end{rem}

Applying the tropical Chern character to the relative refined $\chi_y$-genus we obtain, while not really loosing any information, an object of a completely different flavor. We therefore give it different name:
\begin{defn}
Let $Z$ be a schön subvariety of an algebraic torus $T$. Then we define
\begin{equation*}
\Trop_y(Z)\coloneqq \ch^{\trop}(\chi_y^T(Z)) \in Z^*(N_\R)_\Q[y,(y-1)^{-1}]
\end{equation*}
and call it the refined tropicalization of $Z$.
\end{defn}

The polynomial ring $Z^*(N_\R)_\Q[y,(y-1)^{-1}$ is clearly isomorphic to $\injlim_\Sigma \Mink(\Sigma)\otimes_\Z \Q[y,(y-1)^{-1}]$. The latter is what one gets if one considers tropical cycles with weights in $\Q[y,(y-1)^{-1}]$ instead of $\Z$, so we denote this ring by $Z^*(N_\R; \Q[y,(y-1)^{-1}])$.

\begin{example}
\label{exmpl:formula}
With the formula of Remark \ref{rem:relative chiy of hypersurface as laurent polynomial in y-1}, we see that the refined tropicalization of a hypersurface $V(f)$, where $f$ is generic with Newton polytope $\Delta$, is
\begin{equation*}
\frac{1-\exp\mT(\Delta)}{1-\exp\mT(\Delta)y}=-\sum_{i=1}^n \left(\frac{\exp(-\mT(\Delta))-1}{y-1}\right)^i \ ,
\end{equation*}
where $n= \rk M$. It follows that in the case $M=\Z^2$ we only need to know $\mT(\Delta)$ and the self-intersection number $\mT(\Delta)^2=2\vol(\Delta)$ to compute $\Trop_y(V(f))$. We have depicted the refined tropicalizations for $\Delta_1=\conv\{0, (1,0),(0,1),(1,1)\}$ and $\Delta_2=\conv\{0,(2,0), (1,2) , (0,1)\}$ in Figure \ref{Figure:refined tropicalizations}.
\end{example}

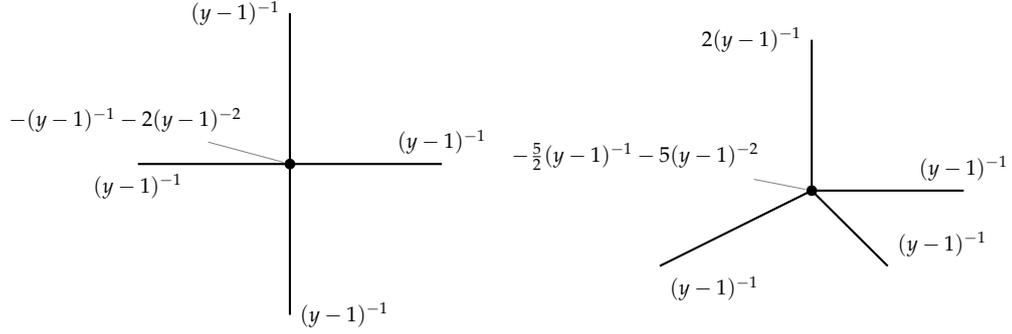
\begin{figure}
\newsavebox{\Newton}
\savebox{\Newton}{
\begin{tikzpicture}[thick,scale=.25]
\draw (0,0) -- (1,0) -- (1,1) -- (0,1) --cycle;
\end{tikzpicture}
}
\newsavebox{\Newtonn}
\savebox{\Newtonn}{
\begin{tikzpicture}[thick,scale=.15]
\draw (0,0) -- (2,0) -- (1,2) -- (0,1) -- cycle;
\end{tikzpicture}
}
\begin{minipage}{.5\linewidth}
\centering
\begin{tikzpicture}[thick,auto, font=\footnotesize]
\draw (0,0) --node[at end]{$(y-1)^{-1}$} (2,0);
\draw (0,0) --node[at end]{$(y-1)^{-1}$} (-2,0);
\draw (0,0) --node[at end]{$(y-1)^{-1}$} (0,2);
\draw (0,0) --node[at end]{$(y-1)^{-1}$} (0,-2);
\node [circle, fill=black, inner sep=.5mm, pin=150:{$-(y-1)^{-1}-2(y-1)^{-2}$}]at (0,0) {};
\end{tikzpicture}
\end{minipage}
\begin{minipage}{.5\linewidth}
\centering
\begin{tikzpicture}[thick,auto, font=\footnotesize]
\draw (0,0)-- node[at end]{$(y-1)^{-1}$}(2,0);
\draw (0,0) --node[at end]{$2(y-1)^{-1}$} (0,2);
\draw (0,0)-- node[at end]{$(y-1)^{-1}$}(-2,-1);
\draw (0,0) -- node[at end]{$(y-1)^{-1}$}(1,-1);
\node [circle, fill=black, inner sep=.5mm, pin=165:{$-\frac 52(y-1)^{-1}-5(y-1)^{-2}$}]at (0,0) {};
\end{tikzpicture}
\end{minipage}
\caption{The refined tropicalizations of generic hypersurfaces with Newton polytopes 
$\Delta_1=$\usebox{\Newton} and $\Delta_2=$\usebox{\Newtonn}, respectively.}
\label{Figure:refined tropicalizations}
\end{figure}

\begin{cor}

Let $Z$ and $Z'$ be two subvarieties of an algebraic torus $T$ that intersect generically. Then we have
\begin{equation*}
\Trop_y(Z\cap Z')=\Trop_y(Z)\cdot \Trop_y(Z')
\end{equation*}
\end{cor}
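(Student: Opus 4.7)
The plan is to chase the definitions and invoke the two structural facts that are already in hand: the multiplicativity of the relative refined $\chi_y$-genus established in the preceding proposition, and the fact that the tropical Chern character is a ring homomorphism. This should make the corollary essentially formal.

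First I would unfold the definition $\Trop_y(Z\cap Z') = \ch^{\trop}(\chi_y^T(Z\cap Z'))$. By the previous proposition applied to the generically intersecting schön subvarieties $Z$ and $Z'$, the argument equals $\chi_y^T(Z)\cdot \chi_y^T(Z')$ in $\mL(M)[y,(y-1)^{-1}]$. Next, I would extend scalars to $\Q[y,(y-1)^{-1}]$ and apply $\ch^{\trop}$ coefficient-wise; since $\ch^{\trop}\colon \mL(M)_\Q \to Z^*(N_\R)_\Q$ is a ring isomorphism, its termwise extension to $\mL(M)_\Q[y,(y-1)^{-1}] \to Z^*(N_\R;\Q[y,(y-1)^{-1}])$ is still a ring homomorphism. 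Thus
\begin{equation*}
\Trop_y(Z\cap Z') = \ch^{\trop}\bigl(\chi_y^T(Z)\cdot \chi_y^T(Z')\bigr) = \ch^{\trop}(\chi_y^T(Z))\cdot \ch^{\trop}(\chi_y^T(Z')) = \Trop_y(Z)\cdot \Trop_y(Z'),
\end{equation*}
which is the desired identity.

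There is essentially no obstacle beyond checking that the multiplicativity of $\ch^{\trop}$ on $\mL(M)_\Q$ extends to the ring of rational functions with coefficients in $\mL(M)_\Q$ with denominators in $\{(y-1)^n\}$; this is automatic since inverting a central element in the target ring commutes with applying a ring homomorphism, and $(y-1)$ is already a unit on both sides (the Laurent polynomial ring and $Z^*(N_\R;\Q[y,(y-1)^{-1}])$). One could additionally remark, in passing, that the implicit assumption $Z$ and $Z'$ schön is needed precisely so that $\chi_y^T(Z)$, $\chi_y^T(Z')$ and $\chi_y^T(Z\cap Z')$ are all defined, which is part of the hypothesis of generic intersection.
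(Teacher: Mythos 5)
Your argument is exactly the paper's: invoke the multiplicativity of $\chi_y^T$ from part (iii) of the main theorem together with the fact that $\ch^{\trop}$ is a ring homomorphism (extended coefficient-wise). The extra care you take in checking that multiplicativity survives the passage to $\mL(M)_\Q[y,(y-1)^{-1}]$ is a reasonable elaboration of what the paper leaves implicit, but it is the same proof.
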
 
\begin{proof}
This follows immediately from part (iii) of Theorem \ref{thm:main theorem} and the fact that $\ch^{\trop}$ is a morphism of rings.
\end{proof}

We remark that in the unrefined case the same equality holds for two not necessarily schön subvarieties $Z$ and $Z'$ of $T$ intersecting generically \cite{OP13}.

Of course, we need to show that $\Trop_y(Z)$ specializes to the ordinary tropicalization. Before stating this result, let us note two things: first, for every element $A\in Z^*(N_\R; \Q[y,(y-1)^{-1}])$ we can define its support $|A|$ analogously to the case where the weights are integers: it is the union of all cones with nonzero weight (cf.\ \cite{AR10}). Secondly, note that $Z^*(N_\R; \Q[y,(y-1)^{-1}])$ is graded by codimension. For $A\in Z^*(N_\R; \Q[y,(y-1)^{-1}])$, we denote by $0\neq A^{\top}$ the nonzero graded component of $A$ with minimal grade, that is the top-dimensional part of $A$.

\begin{prop}
\label{prop:Refined Trop specializes to unrefined Trop}
Let $Z$ be a codimension-$k$ schön subvariety of an algebraic torus $T$, and let $\Trop(Z)$ be its (ordinary) tropicalization. Then we have $|\Trop_y(Z)|=|\Trop(Z)|$ and 
\begin{equation*}
\Trop_0(Z)^{\top}=(-1)^k\Trop(Z) \ .
\end{equation*}
\end{prop}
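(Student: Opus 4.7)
The plan is to reduce everything to a computation of the codimension-$k$ part of the Chern character. First, observe that setting $y=0$ in the definition of $\chi_y^T(Z)$ is straightforward once we expand: since
\begin{equation*}
y^k\lambda_{-y^{-1}}[\mN_{\overline Z/X}^\vee] = \sum_{j=0}^k (-1)^j y^{k-j}\bigl[\bigwedge\nolimits^j \mN_{\overline Z/X}^\vee\bigr] \ ,
\end{equation*}
only the $j=k$ term survives at $y=0$, giving $(-1)^k[\det \mN_{\overline Z/X}^\vee]$. Inverting yields
\begin{equation*}
\chi_0^T(Z) = (-1)^k\, \bI_X\circ i_*\bigl[\det \mN_{\overline Z/X}\bigr] \ .
\end{equation*}

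Next I would apply Grothendieck-Riemann-Roch to the closed immersion $i\colon \overline Z\hookrightarrow X$: using the conormal sequence,
\begin{equation*}
\ch(i_*[\det \mN_{\overline Z/X}]) = i_*\Bigl(\ch[\det \mN_{\overline Z/X}]\cdot \Td(\mN_{\overline Z/X})^{-1}\Bigr) \ .
\end{equation*}
The class inside $i_*$ has codimension-$0$ part equal to $1 \in A^0(\overline Z)$, so its pushforward has codimension-$k$ part equal to $[\overline Z]\in A^k(X)$, while higher codimension contributions arise from positive-codimension terms on $\overline Z$. Under the Chow-to-tropical isomorphism $A^k(X_\Sigma)_\Q\to\Mink^k(\Sigma)_\Q$, whose weight on a $k$-dimensional cone $\sigma$ is $\int_X [\overline Z]\cdot [V(\sigma)]$, the class $[\overline Z]$ corresponds precisely to $\Trop(Z)$ by the standard Sturmfels-Tevelev-type identification (valid for a fan $\Sigma$ fine enough that $(X_\Sigma, Z)$ is almost tropical and the support $|\Trop(Z)|$ is a union of cones of $\Sigma$). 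Combining these identifications yields $\Trop_0(Z)^\top = (-1)^k \Trop(Z)$.

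For the support equality, the same GRR identity shows that $\ch(\chi_y^T(Z))$ is in the image of $i_*\colon A^*(\overline Z)_\Q[y,(y-1)^{-1}]\to A^*(X_\Sigma)_\Q[y,(y-1)^{-1}]$. Hence for any cone $\sigma\in\Sigma$ the weight of $\Trop_y(Z)$ on $\sigma$ computes, via the projection formula, as an integral of $i^*[V(\sigma)]$ against a class on $\overline Z$. Whenever $V(\sigma)\cap \overline Z=\emptyset$, this pullback vanishes, and so does the weight. Since for a sufficiently fine fan the cones $\sigma$ with $V(\sigma)\cap \overline Z\neq\emptyset$ are exactly those in $|\Trop(Z)|$, this gives $|\Trop_y(Z)|\subseteq |\Trop(Z)|$. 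The reverse inclusion is immediate from the top-dimensional computation: the weight of $\Trop_y(Z)^\top$ on a top-dimensional cone $\sigma$ of $|\Trop(Z)|$ is a polynomial in $\Q[y,(y-1)^{-1}]$ whose value at $y=0$ equals $(-1)^k$ times the nonzero weight of $\Trop(Z)$ at $\sigma$, hence is nonzero as a polynomial.

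I expect the only non-routine step is invoking the identification $[\overline Z]\leftrightarrow \Trop(Z)$ under the Chow-to-tropical isomorphism; the rest is a bookkeeping exercise using GRR, the projection formula, and the degree-$0$ behavior of $\ch$ and $\Td$. A minor subtlety is to ensure that the weight-vanishing argument works uniformly in $y$, which it does because everything is a polynomial identity with coefficients given by honest intersection numbers in $A^*(X_\Sigma)$.
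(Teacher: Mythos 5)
Your proof is correct, but it takes a genuinely different route from the paper's. The paper works entirely in $K$-theory: for each cone $\sigma$ it verifies that $\overline Z$ and $V(\sigma)$ are Tor-independent over $X$ (using smoothness of the multiplication map and \cite[III 1.5.1]{SGA6}) and applies the base-change theorem \cite[IV 3.1.1]{SGA6} to rewrite $j^*i_*$ as a pushforward from $V(\sigma)\cap\overline Z$; the weight at $\sigma$ then vanishes when that intersection is empty, and at a top-dimensional cone with $y=0$ it counts the points of the transversal intersection, i.e.\ the tropical multiplicity $m_Z(\sigma)$. You instead pass to Chow theory via Grothendieck--Riemann--Roch for the regular embedding $i$: the class you push forward has codimension-$0$ part equal to $1$ on $\overline Z$, so $i_*$ contributes $[\overline Z]\in A^k(X)$ in the lowest codimension, and you then cite the known identification of $[\overline Z]$ with $\Trop(Z)$ under the Chow-to-tropical isomorphism (cf.\ \cite{OP13,Katz12}). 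That cited identification is essentially what the paper reproves cone-by-cone through the Tor-independence computation, so your approach trades an explicit argument for a reference to a standard result; both ultimately rest on the schön hypothesis ensuring transversal intersection with orbit closures. Your use of refined Gysin pullbacks --- $i^*[V(\sigma)]=0$ when $V(\sigma)\cap\overline Z=\emptyset$ --- to kill the weights off $|\Trop(Z)|$ is a clean alternative to the paper's $K$-theoretic base change. One small slip: in describing a codimension-$k$ Minkowski weight you write ``a $k$-dimensional cone $\sigma$''; it should be a cone of codimension $k$, i.e.\ of dimension $n-k$, so that $[\overline Z]\cdot[V(\sigma)]$ is a $0$-cycle.
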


\begin{proof}
It clearly suffices to show the second equality and the inclusion $|\Trop_y(Z)|\subseteq|\Trop(Z)|$. Let $X$ be a smooth projective toric variety such that $(X,Z)$ is almost tropical, and let $\overline Z$ denote the closure of $Z$ in $X$. Let $\Sigma$ denote the fan of $X$, and let $\sigma\in \Sigma$ be one of its cones. 
We have a commutative diagram
\begin{center}
\begin{tikzpicture}
\matrix[matrix of math nodes, row sep= 5ex, column sep= 4em, text height=1.5ex, text depth= .25ex]{
|(VcZ)|  V(\sigma)\cap\overline Z		& 
|(VcZxT)|  \big(V(\sigma)\cap\overline Z\big)\times T 	&
|(V)| V(\sigma)		\\
 |(Z)| \overline Z 	&
|(ZxT)| \overline Z\times T		& 
|(X)|	X \ ,\\
};
\begin{scope}[->,font=\footnotesize,auto]
\draw (VcZ) --node{$\id\times 1$} (VcZxT);
\draw (VcZxT) -- (V);
\draw (VcZ) to[out=35, in=145]node{$i'$} (V);
\draw (Z) -- node{$\id\times 1$} (ZxT);
\draw (ZxT) -- (X);
\draw (Z) to[out=-35, in=-145]node[swap]{$i$} (X);
\draw (VcZ)--node{$j'$} (Z);
\draw (VcZxT)--node{$j'\times \id$} (ZxT);
\draw (V)-- node{$j$}(X);
\end{scope}
\end{tikzpicture}
\end{center}
whose central squares are cartesian, where $i$, $j$, $i'$ and $j'$ denote the canonical closed immersions. As the multiplication map is smooth, $\overline Z\times T$ and $V(\sigma)$ are Tor-independent over $X$. It is also clear that $\overline Z$ and $ \big(V(\Sigma)\cap \overline Z\big) \times T$ are Tor-independent over $\overline Z\times T$. Thus, by \cite[III 1.5.1]{SGA6}, $\overline Z$ and $V(\sigma)$ are Tor-independent over $X$. 

By construction, the weight of  $\Trop_y(Z)$ at $\sigma$ equals
\begin{equation*}
\int_{V(\sigma)} j^*\ch\left(i_*\bigg[\Big(y^k\lambda_{-y^{-1}}[\mN_{\overline Z/X}^\vee]\Big)^{-1}\bigg]\right)=\int_{V(\sigma)}\ch \left( j^*i_*\bigg[\Big(y^k\lambda_{-y^{-1}}[\mN_{\overline Z/X}^\vee]\Big)^{-1}\bigg]\right) \ ,
\end{equation*}
where the equality holds because taking Chern characters commutes with taking pullbacks. Since $V(\sigma)$ and $\overline Z$ are Tor-independent over $X$, we have $j^* i_*=i_*(j')^*$ by \cite[IV 3.1.1]{SGA6}, so the above is equal to
\begin{equation*}
\int_{V(\sigma)}\ch \left( i'_*(j')^*\Big(y^k\lambda_{-y^{-1}}[\mN_{\overline Z/X}^\vee]\Big)^{-1}\right)=\int_{V(\sigma)}\ch \left( i'_*\bigg[\Big(y^k\lambda_{-y^{-1}}[(j')^*\mN_{\overline Z/X}^\vee]\Big)^{-1}\bigg]\right)
\end{equation*}
If $\sigma$ is not contained in $|\Trop(Z)|$, then $V(\sigma)\cap \overline Z$ is empty by \cite[Lemma 2.2]{Tev07}, and hence $[(j')^*\mN^\vee_{\overline Z/X}]$ and thereby the weight at $\sigma$ is zero. In particular, we know that every cone of dimension greater $\dim(Z)$ has weight $0$. Now suppose $\sigma\in \Sigma$ is a $\dim(Z)$-dimensional cone contained in $|\Trop(Z)|$. Plugging in $0$ in the formula above, we see that the weight of $\Trop_0(Z)$ at $\sigma$ is equal to 
\begin{equation*}
(-1)^k \int_{V(\sigma)} \ch \left(  i'_*\left[(j')^*\bigwedge\nolimits^k\mN_{\overline Z/X}\right]\right) \ .
\end{equation*} 
Because $Z$ is schön, the intersection $V(\sigma)\cap\overline Z$ is an integral $0$-dimensional scheme whose number of points is equal to the multiplicity of $\Trop(Z)$ at $\sigma$, which we denote by $m_Z(\sigma)$. Thus, $(j')^*\bigwedge\nolimits^k\mN_{\overline Z/X}$ is trivial, and the weight of $\Trop_0(Z)$ at $\sigma$ is equal to $(-1)^k m_Z(\sigma)$ times the degree of the Chern character of the structure sheaf of a point, which is $1$ by \cite[Example 15.2.16]{F89}. This finishes the proof.
\end{proof}

Given the refined tropicalization $\Trop_y(Z)$, it is nontrivial to recover $\chi_y^T(Z)$. It is therefore desirable to be able to read off $\chi_y(Z)$ directly from $\Trop_y(Z)$, without passing through $\chi_y^T(Z)$. But since we obtained $\Trop_y(Z)$ from $\chi_y^T(Z)$ by taking Chern characters coefficient-wise, we can use Hirzebruch-Riemann-Roch to do this. This depends, however, on the choice of a \emph{Todd measure}, as explained in the next paragraph.

Let $X$ be a complete toric variety corresponding to a fan $\Sigma$, and let $\mF$ be a vector bundle on $X$. Hirzebruch-Riemann-Roch tells us that 
\begin{equation*}
\chi(X,\mF)=\int_X \ch(\mF)\cap \Td(X) \ ,
\end{equation*}
where $\Td(X)$ is the Todd class of $X$. Both $\ch(\mF)$ and $\Td(X)$ can be described combinatorially: the Chern character $\ch(\mF)$ is completely determined by the Minkowski weight $\trop(\ch(\mF))$ on $\Sigma$ assigning to $\sigma\in\Sigma$ the  weight $\int_X \ch(\mF)\cap [V(\sigma)]$ ,whereas the Todd class can be written as $\Td(X)= \sum_{\sigma\in \Sigma} \mu^\Sigma(\sigma) [V(\sigma)]$ for appropriate $\mu^\Sigma(\sigma)\in \Q$. Hirzebruch-Riemann-Roch then yields the formula
\begin{equation*}
\chi(X,\mF)=\sum_{\sigma\in \Sigma} \trop(\ch(\mF))(\sigma) \cdot \mu^\Sigma(\sigma) \ .
\end{equation*}
Because of the resulting connection between counting lattice points and the Todd class of toric varieties Danilov posed the question whether the coefficients $\mu^\Sigma(\sigma)$ can be chosen independently of $\Sigma$ \cite{Danilov78}. This was answered positively, but non-constructively by Morelli \cite{Mor93b}, and later constructively by Pommersheim and Thomas \cite{PT04}. In fact, denoting the set of strongly convex rational polyhedral cones in $M_\R$ by $\mathcal C(M)$, both Morelli, and Pommersheim and Thomas showed that there exists a function $\mu\colon \mathcal C(M)\to \Q$ such that
\begin{enumerate}[label=(\roman*)]
\item  For every fan $\Sigma$ in $N_\R$ with associated toric variety $X_\Sigma$ we have 
\begin{equation*}
\Td(X_\Sigma)=\sum_{\sigma\in\Sigma}\mu(\sigma)[V(\sigma)] \ .
\end{equation*}
\item Whenever a cone $\tau\in\mathcal C(M)$ is the union of cones $\sigma_1\ldots \sigma_k$ of the same dimension, such that the $\sigma_i$ have disjoint relative interiors, we have
\begin{equation*}
\mu(\tau)=\sum_{i=1}^k\mu(\sigma_i) \ .
\end{equation*}
\end{enumerate}
Because of the latter additivity property any such $\mu$ is called a \emph{Todd measure}. Unfortunately, the Todd-measure is not uniquely determined by the two properties above, nor is there a canonical choice, unless one extends scalars (cf.\ \cite[Section 6]{PT04}). The additivity property makes it possible to integrate over tropical cycles:

\begin{defn}
Let $A\in Z^*(N_\R; G)$ be a tropical cycle with coefficients in an abelian group $G$, represented by a pair $(\Sigma,\omega)$, where $\omega\colon \Sigma\to G$ is a weight function on the fan $\Sigma$. Then we define
\begin{equation*}
\int A \,d\mu \coloneqq \sum_{\sigma\in\Sigma} \omega(\sigma) \mu(\sigma) \ .
\end{equation*}
Note that this is independent of the choice of fan structure by the additivity of $\mu$.
\end{defn}

Finally, part (i) of Theorem \ref{thm:main theorem}, together with what we mentioned above, yields the following corollary:

\begin{cor}
\label{cor: integral over refined trop yields chiy}
Let $Z$ be a schön subvariety of an algebraic torus $T$ with cocharacter lattice $N$. Let $\mu$ be a Todd measure on the strongly convex rational polyhedral cones in $N_\R$. Then we have
\begin{equation*}
\chi_y(Z)=\chi_y(T)\cdot\int\Trop_y(Z) \,d\mu \ .
\end{equation*}
\end{cor}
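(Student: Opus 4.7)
The plan is to combine Corollary \ref{cor:chiy-formula for schoen subvarieties} with Hirzebruch-Riemann-Roch applied coefficient-wise, matching the combinatorial formula for the Todd class against the definition of $\int \Trop_y(Z)\,d\mu$. By Corollary \ref{cor:chiy-formula for schoen subvarieties} we already know that $\chi_y(Z) = \chi_y(T)\cdot \Lat(\chi_y^T(Z))$, so it suffices to prove
\begin{equation*}
\Lat\bigl(\chi_y^T(Z)\bigr) = \int \Trop_y(Z)\,d\mu \ .
\end{equation*}

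First I would choose a smooth projective toric variety $X = X_\Sigma$ such that $(X,Z)$ is almost tropical, so that by definition $\chi_y^T(Z) = \bI_X(\alpha)$ for some $\alpha \in K(X)_\Q[y,(y-1)^{-1}]$. By the second of the three properties of $\bI_X$ recalled in Section \ref{subsec:KX for toric variety}, applied coefficient-wise in $y$, we have $\Lat\circ\bI_X(\alpha) = \chi(X,\alpha)$. Hirzebruch-Riemann-Roch, again applied coefficient-wise, then gives
\begin{equation*}
\Lat\bigl(\chi_y^T(Z)\bigr) = \chi(X,\alpha) = \int_X \ch(\alpha) \cap \Td(X) \ .
\end{equation*}

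Next I would plug in the combinatorial expressions for both factors. Writing $\Td(X_\Sigma) = \sum_{\sigma\in\Sigma} \mu(\sigma)[V(\sigma)]$ via property (i) of the Todd measure, the projection formula turns the right-hand side into
\begin{equation*}
\sum_{\sigma\in\Sigma} \mu(\sigma) \int_X \ch(\alpha) \cap [V(\sigma)] = \sum_{\sigma\in\Sigma} \mu(\sigma)\, \trop(\ch(\alpha))(\sigma) \ ,
\end{equation*}
where the inner numbers $\trop(\ch(\alpha))(\sigma) \in \Q[y,(y-1)^{-1}]$ are precisely the Minkowski weights representing the tropical Chern character, as spelled out at the beginning of Section 4. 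Since $\ch^{\trop}$ is compatible with the isomorphism $\bI_X$ on $X$, the weight at $\sigma$ of the representative of $\Trop_y(Z) = \ch^{\trop}(\chi_y^T(Z))$ on $\Sigma$ is exactly $\trop(\ch(\alpha))(\sigma)$, so the expression above equals $\int \Trop_y(Z)\,d\mu$ by definition.

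The only thing requiring care, and what I regard as the main (mild) obstacle, is bookkeeping across coefficients: $\chi_y^T(Z)$ lies in $\mL(M)_\Q[y,(y-1)^{-1}]$, not in $\mL(M)$, so one must verify that $\chi$, HRR, $\ch$, and the identification with Minkowski weights all extend coefficient-wise to the ring $K(X)_\Q[y,(y-1)^{-1}]$ (and, equivalently, that the additivity property (ii) of the Todd measure indeed makes $\int(\cdot)\,d\mu$ well-defined on tropical cycles with weights in $\Q[y,(y-1)^{-1}]$, independent of the chosen fan refinement). Once this is checked, chaining the displayed equalities completes the proof.
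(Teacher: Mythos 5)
Your proposal is correct and is exactly the argument the paper has in mind: the paper presents the corollary as an immediate consequence of Theorem \ref{thm:main theorem}(i) together with the preceding discussion of Hirzebruch--Riemann--Roch, the combinatorial Todd class, and the definition of $\int(\cdot)\,d\mu$. You have simply spelled out the chain $\Lat(\chi_y^T(Z)) = \chi(X,\alpha) = \int_X \ch(\alpha)\cap\Td(X) = \sum_\sigma \mu(\sigma)\,\trop(\ch(\alpha))(\sigma) = \int \Trop_y(Z)\,d\mu$ that the paper leaves implicit, and your closing caveat about coefficient-wise extension over $\Q[y,(y-1)^{-1}]$ is the right thing to flag.
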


\begin{example}
Let us use the formula of Corollary \ref{cor: integral over refined trop yields chiy} to calculate the $\chi_y$-genera of two schön hypersurfaces $Z_1$ and $Z_2$ of $(\kappa^*)^2$  with  Newton polytopes $\Delta_1=\conv\{0, (1,0), (0,1),(1,1)\}$ and $\Delta_2=\conv\{0,(2,0) ,(1,2) ,(0,1)\}$, respectively. We have already calculated their refined tropicalizations using the formula of Example \ref{exmpl:formula} and have depicted them in Figure \ref{Figure:refined tropicalizations}. To choose a Todd measure $\mu$ we note that for every complete toric surface $X$ with fan $\Sigma$ we have 
\begin{equation*}
\Td(X)= 1+ \frac 12 \sum_{\rho} [D_\rho] + [X] \ ,
\end{equation*}
where the sum is over all rays $\rho$ of $\Sigma$, and $D_\rho$ is the torus-invariant prime divisor corresponding to $\rho$ by the Orbit-Cone correspondence. In light of this formula, we choose $\mu$ such that it assigns $1$ to the point, i.e\ the unique $0$-dimensional cone, and $\frac 12$ to every ray. Its values on the full-dimensional cones are less canonical, but depend on some choices as explained in detail in \cite{PT04}. Luckily, we will not actually need to know these values for our purposes. It is worth noting though that if one is willing to work with coefficients in $\R$, a suitable choice would be the normalized angle measure on the cocharacter lattice, which equals $\Z^2$ in our case. That is, one could choose $\mu$ such that it assigns to any full-dimensional cone $\frac 1{2\pi}$ times the angle around its apex with respect to the standard scalar product on $\R^2$.
Whatever our choice for the values of $\mu$ on the top-dimensional cones may be, knowing the values on the point and on the rays yields
\begin{equation*}
\chi_y(Z_1)=(y-1)^{2} \bigg[ \frac 12 \bigg(4\cdot (y-1)^{-1}\bigg) +1\cdot \bigg(-(y-1)^{-1}-2(y-1)^{-2} \bigg)\bigg]= 
y-3
\end{equation*}
and 
\begin{equation*}
\chi_y(Z_2)=(y-1)^{2}\bigg[
\frac 12 \bigg ( 5\cdot (y-1)^{-1}\bigg)+ 1\cdot \bigg (-\frac 52(y-1)^{-1}-5 (y-1)^{-2}\bigg)
\bigg]
= -5 \ .
\end{equation*}
More generally, if $Z$ is a schön hypersurface of $(\kappa^*)^2$, given by an equation with Newton polytope $\Delta\subseteq \Z^2$, then the formulas of Example \ref{exmpl:formula} and Corollary \ref{cor: integral over refined trop yields chiy} yield
\begin{equation*}
\chi_y(Z)=\frac { |\partial \Delta \cap \Z^2|-2\vol(\Delta)}2 (y-1) -2\vol(\Delta) \ ,
\end{equation*}
where $\partial\Delta$ denotes the boundary of $\Delta$, and the volume appears since the self-intersection $\mathcal T(\Delta)^2$ equals $2\vol(\Delta)$.
On the other hand, if $X$ is any smooth projective toric compactification of $(\kappa^*)^2$ such that $(X,Z)$ is almost tropical, then the genus of the smooth curve $\overline Z$ is $|\mathring \Delta\cap \Z^2|$ by \cite[Prop.\ 10.5.8]{CLS}, and the number of points of $\overline Z$ in the boundary of $X$ is equal to $|\partial \Delta\cap \Z^2|$ by \cite[Lemma 3.4.6]{TropBook} . Therefore, we also have
\begin{equation*}
\chi_y(Z) =\big(1-|\mathring\Delta \cap \Z^2|\big) (y+1) - |\partial \Delta\cap \Z^2| \ .
\end{equation*}
Equating the two formulas we recover Pick's theorem.
\end{example}

\bibliographystyle{../../TexStuff/amstest}
\bibliography{../../TexStuff/bib}

\providecommand{\bysame}{\leavevmode\hbox to3em{\hrulefill}\thinspace}
\providecommand{\MR}{\relax\ifhmode\unskip\space\fi MR }
\providecommand{\MRhref}[2]{%
  \href{http://www.ams.org/mathscinet-getitem?mr=#1}{#2}
}
\providecommand{\href}[2]{#2}
\begin{thebibliography}{Mor93b}

\bibitem[AR10]{AR10}
L.~Allermann and J.~Rau, \emph{First steps in tropical intersection theory},
  Mathematische Zeitschrift \textbf{264} (2010), no.~3, 633--670.

\bibitem[BG16]{BG16}
F.~Block and L.~G\"ottsche, \emph{Refined curve counting with tropical
  geometry}, Compos. Math. \textbf{152} (2016), no.~1, 115--151.

\bibitem[CLS11]{CLS}
D.~Cox, J.~Little, and H.~Schenck, \emph{Toric varieties}, Amer. Math. Soc.,
  Providence, RI, 2011.

\bibitem[Dan78]{Danilov78}
V.~I. Danilov, \emph{The geometry of toric varieties}, Uspekhi Mat. Nauk
  \textbf{33} (1978), no.~2(200), 85--134, 247.

\bibitem[DHN16]{DHN16}
S.~{Di Rocco}, C.~Haase, and B.~Nill, \emph{Discrete mixed volume and
  {H}odge-{D}eligne numbers}, 2016, \href {http://arxiv.org/abs/1609.09115}
  {\path{arXiv:1609.09115}}.

\bibitem[DK86]{DK86}
V.~I. Danilov and A.~G. Khovanski\u\i, \emph{Newton polyhedra and an algorithm
  for calculating {H}odge-{D}eligne numbers}, Izv. Akad. Nauk SSSR Ser. Mat.
  \textbf{50} (1986), no.~5, 925--945.

\bibitem[EV92]{EV92}
H.~Esnault and E.~Viehweg, \emph{Lectures on vanishing theorems}, DMV Seminar,
  vol.~20, Birkh\"auser Verlag, Basel, 1992.

\bibitem[FS97]{FS97}
W.~Fulton and B.~Sturmfels, \emph{Intersection theory on toric varieties},
  Topology \textbf{36} (1997), no.~2, 335--353.

\bibitem[Ful93]{F89}
W.~Fulton, \emph{{Introduction to toric varieties. The 1989 William H. Roever
  lectures in geometry.}}, {Annals of Mathematics Studies. 131. Princeton, NJ:
  Princeton University Press.}, 1993.

\bibitem[JY16]{JY16}
A.~Jensen and J.~Yu, \emph{Stable intersections of tropical varieties}, J.
  Algebraic Combin. \textbf{43} (2016), no.~1, 101--128.

\bibitem[Kat12]{Katz12}
E.~Katz, \emph{{Tropical intersection theory from toric varieties.}}, {Collect.
  Math.} \textbf{63} (2012), no.~1, 29--44.

\bibitem[KS12]{KS12}
E.~Katz and A.~Stapledon, \emph{Tropical geometry and the motivic nearby
  fiber}, Compositio Mathematica \textbf{148} (2012), no.~1, 269–294.

\bibitem[KS16]{KS16}
\bysame, \emph{Tropical geometry, the motivic nearby fiber, and limit mixed
  {H}odge numbers of hypersurfaces}, Res. Math. Sci. \textbf{3} (2016), Paper
  No. 10, 36.

\bibitem[LQ11]{LQ11}
M.~Luxton and Z.~Qu, \emph{Some results on tropical compactifications}, Trans.
  Amer. Math. Soc. \textbf{363} (2011), no.~9, 4853--4876.

\bibitem[McM89]{McMullen89}
P.~McMullen, \emph{The polytope algebra}, Adv. Math. \textbf{78} (1989), no.~1,
  76--130.

\bibitem[McM09]{McMullen09}
\bysame, \emph{Valuations on lattice polytopes}, Adv. Math. \textbf{220}
  (2009), no.~1, 303--323.

\bibitem[Mor93a]{Mor93}
R.~Morelli, \emph{The {$K$}-theory of a toric variety}, Adv. Math. \textbf{100}
  (1993), no.~2, 154--182.

\bibitem[Mor93b]{Mor93b}
\bysame, \emph{Pick's theorem and the {T}odd class of a toric variety}, Adv.
  Math. \textbf{100} (1993), no.~2, 183--231.

\bibitem[MS15]{TropBook}
D.~Maclagan and B.~Sturmfels, \emph{Introduction to {T}ropical {G}eometry},
  Graduate Studies in Mathematics, vol. 161, American Mathematical Society,
  Providence, RI, 2015.

\bibitem[NPS16]{NPS16}
J.~Nicaise, S.~Payne, and F.~Schroeter, \emph{Tropical refined curve counting
  via motivic integration}, 2016, \href {http://arxiv.org/abs/arXiv:1603.08424}
  {\path{arXiv:arXiv:1603.08424}}.

\bibitem[OP13]{OP13}
B.~Osserman and S.~Payne, \emph{Lifting tropical intersections}, Doc. Math.
  \textbf{18} (2013), 121--175.

\bibitem[PT04]{PT04}
J.~Pommersheim and H.~Thomas, \emph{Cycles representing the {T}odd class of a
  toric variety}, J. Amer. Math. Soc. \textbf{17} (2004), no.~4, 983--994.

\bibitem[Rau16]{Rau08}
J.~Rau, \emph{{Intersections on tropical moduli spaces.}}, {Rocky Mt. J. Math.}
  \textbf{46} (2016), no.~2, 581--662.

\bibitem[{SGA6}]{SGA6}
\emph{Th\'eorie des intersections et th\'eor\`eme de {R}iemann-{R}och}, Lecture
  Notes in Mathematics, Vol. 225, Springer-Verlag, Berlin-New York, 1971,
  S\'eminaire de G\'eom\'etrie Alg\'ebrique du Bois-Marie 1966--1967 (SGA 6),
  Dirig\'e par P. Berthelot, A. Grothendieck et L. Illusie. Avec la
  collaboration de D. Ferrand, J. P. Jouanolou, O. Jussila, S. Kleiman, M.
  Raynaud et J. P. Serre.

\bibitem[Tev07]{Tev07}
J.~Tevelev, \emph{{Compactifications of subvarieties of tori.}}, Am. J. Math.
  \textbf{129} (2007), no.~4, 1087--1104.

\end{thebibliography}

\normalsize
Andreas Gross, Imperial College London, Department of Mathematics, South Kensington
Campus, London SW7 2AZ, UK, \href{mailto:a.gross@imperial.ac.uk}{\ttfamily a.gross@imperial.ac.uk}

\end{document}